\documentclass[a4paper,11pt]{article}
\usepackage{amsmath,amsthm,amssymb,fullpage,enumerate,tikz}
\usepackage{xcolor}
\definecolor{internallinkcolor}{rgb}{0,.5,0}
\definecolor{externallinkcolor}{rgb}{0,0,.5}
\usepackage[colorlinks=true,
urlcolor=externallinkcolor,
linkcolor=internallinkcolor,
filecolor=externallinkcolor,
citecolor=internallinkcolor,
]{hyperref}
\usepackage[capitalise]{cleveref}
\usepackage[nobysame,msc-links,non-sorted-cites]{amsrefs}

\usepackage{thmtools}
\declaretheorem[parent=section]{theorem}
\declaretheorem[sibling=theorem]{lemma}
\declaretheorem[sibling=theorem]{corollary}
\declaretheorem[sibling=theorem]{observation}
\declaretheorem[sibling=theorem]{conjecture}

\newcommand{\floor}[1]{\lfloor#1\rfloor}
\newcommand{\ceil}[1]{\left\lceil #1\right\rceil}
\newcommand{\defn}[1]{\textbf{#1}}
\newcommand{\sm}{\smallsetminus}
\DeclareMathOperator{\rig}{rig}
\newcommand{\maxwell}{\mathsf{M}}

\title{Rigidity of complements of bounded-degree graphs}
\author{John Haslegrave, Peleg Michaeli and Anthony Nixon}

\begin{document}

\maketitle

\begin{abstract}
Maxwell observed that the graph of any rigid generic framework in $\mathbb{R}^d$ on $n$ vertices has at least $dn-\binom{d+1}{2}$ edges.
In this article we prove that graphs whose complement has maximum degree at most two and no component isomorphic to a triangle or a square
are rigid in the maximum dimension allowed by this observation.
In particular, this
determines the precise maximum dimension
in which the graph obtained from a complete graph $K_{2m}$ by deleting a perfect matching is rigid,
resolving a recent conjecture of Lew. We also deduce bounds on the rigidity of complements of bounded-degree graphs more generally, which significantly improve existing degree-based bounds.
\end{abstract}

\section{Introduction}

A \defn{bar-joint framework} $(G,p)$ is an ordered pair consisting of a finite, simple graph $G=(V,E)$ and a realisation $p:V\rightarrow \mathbb{R}^d$.
The framework is \defn{rigid}, informally speaking, if the only edge-length-preserving continuous deformations of the vertices arise from isometries of $\mathbb{R}^d$.
It is a fundamental problem in discrete geometry, motivated by numerous practical applications, to determine whether a given framework is rigid.
However, in general this is computationally infeasible \cite{Abb08}.
Thus attention focuses on the generic case.
A framework is \defn{generic} if the coordinates of the points $p(v):v\in V$ form an algebraically independent set over $\mathbb{Q}$.
It is well known that a generic framework $(G,p)$ in $\mathbb{R}^d$ is rigid if and only if every other generic framework $(G,q)$ in $\mathbb{R}^d$ is rigid \cite{AR78}.
In this sense rigidity depends only on the graph and the dimension.

Determining which graphs are rigid in a given dimension is a well-studied problem.
When $d\leq 2$ complete descriptions are known \cites{Lam70,PG27} whereas when $d\geq 3$ it is the central open problem in rigidity theory to give a combinatorial characterisation of rigid graphs.
Partial results are known from various directions; see \cites{Clinch2022,HJNS25+,JLV25+,Katoh2011,KLM26} inter alia.
We add to this growing literature on special cases by proving the following conjecture of Lew \cite{CN23}*{Conjecture~3.4} and some generalisations.

Put $a_0=0$. For each integer $j\ge 1$, let $a_j$ be the unique integer satisfying
$\binom{a_j}{2}<j\le\binom{a_j+1}{2}$.
Equivalently, for every $j\ge 0$,
\begin{equation}\label{eq:triangular-index}
  a_j=\floor{\sqrt{2j}+1/2}.
\end{equation}

\begin{conjecture}[Lew]\label{con:1}
  For all $r\ge 3$, the graph obtained from $K_{2r}$ by deleting a perfect matching is generically
  $(2r-1-a_r)$-rigid.
\end{conjecture}

The case $r=3$ is the octahedron which, as a triangulation of the sphere, is generically $3$-rigid (see \cite{Glu75}).
For this reason, the corresponding graph for larger $r$ is sometimes called the \emph{hyperoctahedral graph}.

We regard every graph as generically $0$-rigid. 
For an $n$-vertex graph $G$,
define its \defn{rigidity dimension} $\rig(G)$ to be the largest nonnegative integer $d\leq n-1$
for which $G$ is generically $d$-rigid.
Define its \defn{Maxwell dimension} $\maxwell(G)$ to be the largest nonnegative integer $d\leq n-1$ satisfying
\mbox{$dn-\binom{d+1}{2}\leq |E(G)|$}.
For graphs of the same order, the Maxwell dimension is nondecreasing in the number of edges.
Maxwell's edge count gives $\rig(G)\leq\maxwell(G)$.
We say that $G$ is \defn{Maxwell-sharp} if $\rig(G)=\maxwell(G)$.
Thus \cref{con:1} says that complements of $1$-regular graphs are Maxwell-sharp.

The following conjecture of Krivelevich, Lew and Michaeli~\cite{KLM26} extends \cref{con:1} by predicting when the two basic necessary conditions, vertex connectivity and Maxwell's count, are sufficient.
\begin{conjecture}\label{con:KLM}
  Let $n>d\ge 1$, and let $G$ be an $n$-vertex graph.
  If
  \[
    \delta(G)\ge
    \max\left\{\frac{n+d}{2}-1,\,
    2d-\frac{d(d+1)}{n}\right\},
  \]
  then $G$ is generically $d$-rigid.
\end{conjecture}

The two expressions in the maximum correspond, respectively, to vertex connectivity and Maxwell's count.

Krivelevich, Lew and Michaeli~\cite{KLM26} proved \cref{con:KLM} when \mbox{$d\leq(\sqrt{8n-15}-1)/4$},
and proved that the stronger assumption $\delta(G)\geq n/2+d-1$ suffices when $d\leq cn/\log^2 n$ for some absolute $c>0$.
Jord\'an, Liu and Vill\'anyi~\cite{JLV25+} subsequently proved that the latter assumption suffices for all $d$,
and proved \cref{con:KLM} when $d\leq n/29$.
Specialising \cref{con:KLM} to the dense regular regime gives the following conjecture.

\begin{conjecture}
\label{con:KLM-regular}
  Every $k$-regular $n$-vertex graph with \mbox{$k\geq(3n-6)/4$} is Maxwell-sharp.
\end{conjecture}

For a $k$-regular graph, Maxwell's edge-count condition in dimension $d$ is equivalent to the inequality $k\geq 2d-d(d+1)/n$ in \cref{con:KLM}.
When $k\geq(3n-6)/4$, the other inequality in \cref{con:KLM}, namely $k\geq(n+d)/2-1$, is automatically satisfied for $d=\maxwell(G)$.
Thus \cref{con:KLM-regular} is precisely the regular dense case of \cref{con:KLM}.
The case $k=n-1$ is trivial.
The first nontrivial case, $k=n-2$, is precisely \cref{con:1}.
Our main result gives a complete resolution of \cref{con:1}.
More generally, it proves the stronger Maxwell-sharp conclusion in the \mbox{$\delta(G)\geq n-3$} regime of \cref{con:KLM},
apart from graphs whose complement has a component isomorphic to $C_3$ or $C_4$.
Beyond these results, \cref{con:KLM} remains wide open, even in the regular dense setting of \cref{con:KLM-regular}.

\begin{theorem}\label{thm:2-reg}
  Let $n\geq 4$, and let $G$ be an $n$-vertex graph with $\delta(G)\geq n-3$.
  Suppose no connected component of the complement of $G$ is isomorphic to $C_3$ or $C_4$.
  Then $G$ is Maxwell-sharp.
\end{theorem}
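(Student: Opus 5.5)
We reduce to the case where the complement has no isolated vertices, and then argue by induction on $n$ using the standard vertex-addition operations of rigidity theory. Write $H=\overline{G}$. By hypothesis $\Delta(H)\le 2$, so $H$ is a disjoint union of paths and cycles, and every cycle has length at least $5$.

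\emph{Step 1: coning.} Suppose $H$ has an isolated vertex $v$. Then $G$ is the cone over $G-v$. By the coning theorem, $\rig(G)=\rig(G-v)+1$. On the Maxwell side, $|E(G)|-|E(G-v)|=n-1$, and
\[
dn-\binom{d+1}{2}-\Bigl((d-1)(n-1)-\binom{d}{2}\Bigr)=n-1 .
\]
So the Maxwell inequality for $G$ in dimension $d$ is equivalent to that for $G-v$ in dimension $d-1$, which gives $\maxwell(G)=\maxwell(G-v)+1$. Hence $G$ is Maxwell-sharp if and only if $G-v$ is. We may therefore assume $H$ has no isolated vertices, so $e:=|E(H)|$ satisfies $n/2\le e\le n$.

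\emph{Step 2: the target dimension.} Rewriting Maxwell's count, $dn-\binom{d+1}{2}\le\binom n2-e$ is equivalent to $\binom{n-d}{2}\ge e$. So $\maxwell(G)=n-a$, where $a$ is the least integer with $\binom a2\ge e$; in the notation of \eqref{eq:triangular-index}, $a=a_e+1$. The goal is to show that $G$ is generically $(n-a)$-rigid, that is, rigid in small codimension $a\approx\sqrt{2e}$.

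\emph{Step 3: the inductive step.} The basic move is a $0$-extension. If $u$ has degree at least $d$ in $G$ and $G-u$ is $d$-rigid, then $G$ is $d$-rigid. Every vertex of $G$ has degree at least $n-3\ge n-a$ once $a\ge3$, so the degree condition always holds. Deleting a vertex $u$ of degree $2$ in $H$ lowers $n$ by $1$ and $e$ by $2$. Induction (after re-applying Step 1 to any newly isolated vertices) then succeeds whenever $\binom{a-1}{2}\ge e-2$. Removing a suitable set $S$ of several vertices at once, say a whole path component, or an end vertex of degree $1$ together with its neighbour, gives more flexibility. Such a removal drops $e$ by up to $|S|+1$, or by $|S|$ for a removed cycle, while $n$ drops by $|S|$. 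One then applies $|S|$ successive $0$-extensions.

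\emph{Step 4: the tight values.} The bookkeeping fails only when $e$ lies just above a triangular number $\binom{a-1}{2}$. In that range $a$ increases by $1$ exactly when one more edge is added, so deleting vertices loses too little. I expect this regime to be the main obstacle. There I would replace the $0$-extension by a $1$-extension (vertex splitting). Concretely, take an edge $xy$ of $H$ on a long path or cycle, contract it in $G$, observe that the contracted graph again has a complement of maximum degree at most $2$ with fewer vertices, and split back. This should gain the extra dimension needed, because the split vertex has degree $n-2$ or $n-3$ in $G$ and one then checks the vertex-splitting degree hypotheses. The restriction to cycles of length at least $5$ is what guarantees that a suitable edge or path exists, and that the reduced complement contains no $C_3$ or $C_4$ component.

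\emph{Step 5: base cases.} Finally, the base cases with small $n$ (roughly $n\le 8$, together with $H$ a single $C_5$, $C_6$ or $C_7$ or a short path system) are checked directly. Either $G$ contains a spanning $d$-rigid subgraph such as a triangulated sphere or a known Maxwell-tight graph, or a rigidity-matrix rank computation settles it. These checks also show exactly why $C_3$ and $C_4$ components must be excluded.
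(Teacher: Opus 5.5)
There is a genuine gap: nothing in your inductive step raises the rigidity dimension. A $0$-extension and a vertex split (or $1$-extension) both take a $d$-rigid graph to a $d$-rigid graph. But $\maxwell(G)=n-1-a_{|E(\overline G)|}$ with $a_e\approx\sqrt{2e}$, so the target goes up by one with almost every added vertex.

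Your own criterion $\binom{a-1}{2}\ge e-2$ (recall $\binom{a-1}{2}<e$) holds only when $e$ exceeds the triangular number $\binom{a-1}{2}$ by at most $2$. So deleting a vertex and $0$-extending back works only just above triangular numbers and fails in the generic case. This is the reverse of what Step~4 asserts. For example, if $\overline G=C_{20}$ then $\maxwell(G)=13$, while $G-u=\overline{P_{19}}$ has Maxwell dimension $12$, so $0$-extension gives only $12$. Deleting a larger set $S$ is worse, since $a$ would then have to drop by $|S|$.

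Step~4 does not repair this. An edge $xy$ of $\overline G$ is a non-edge of $G$, whereas \cref{lem:vertex-splitting} creates an adjacent pair, and splitting preserves $d$ anyway. On a long path or cycle, identifying $x$ and $y$ gives a cone over $G-\{x,y\}$. So cone-then-split gains only one dimension for two new vertices; that is essentially \cref{lem:paired-extension}, which the paper uses only for the lossy bounds of \cref{sec:bounded-degree}.

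The missing idea is a dimension-raising move for a new vertex adjacent to all but one or two existing vertices: cone, then delete one or two cone edges. The paper relies on the Hewetson--Jackson--Nixon--Smith results (\cref{thm:hjns1,thm:hjns2}, used via \cref{thm:almostcone,thm:almostcone2}). Such an almost-cone is $(d+1)$-rigid when the base graph is $d$-rigid with at least one (respectively two) excess edges and suitable $d$-circuits meet the vertices whose cone edges were deleted.

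\Cref{lem:remove-one,lem:remove-two} supply those circuits by transplanting a circuit onto a vertex of degree at least $n-2$, and by using an automorphism to handle the second vertex. The non-twin hypothesis of \cref{thm:hjns2} is exactly where $k\ge 5$ enters.

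With this tool, $\overline{P_n}$ and $\overline{C_n}$ are handled by induction on $n$; for cycles one first adds the edge $xy$, then cones and deletes two cone edges. A general complement is then built by adding one path or cycle component at a time, vertex by vertex, gaining a dimension exactly when the Maxwell count permits (\cref{lem:add-cycle}). Your Step~1 is the trivial case $F=P_1$ of this.

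Finally, nothing supports your remark that small cases show why $C_3$ and $C_4$ must be excluded. The authors regard that exclusion as an artefact of the non-twin hypothesis rather than a genuine obstruction.
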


\Cref{thm:2-reg} includes \cref{con:1}, since, when $n$ is even, the complement of an $(n-2)$-regular graph is a perfect matching.
In the $(n-3)$-regular case, it says that the complement of every $2$-factor with no component isomorphic to $C_3$ or $C_4$ is Maxwell-sharp.

We believe that the exclusion of $C_3$ and $C_4$ is a limitation of our argument rather than a genuine obstruction.
This limitation is inherited from the non-twin hypothesis in a theorem of Hewetson, Jackson, Nixon and Smith~\cite{HJNS25+},
which we restate below as \cref{thm:hjns2};
that hypothesis fails in the final step of our argument for a $C_3$- or $C_4$-component.

A direct extension of \cref{thm:2-reg} to complements of all $3$-regular graphs is false.
Indeed, the complement of $K_{3,3}$ satisfies Maxwell's edge-count condition in dimension $1$ but is disconnected, and hence is not generically $1$-rigid.
This example does not contradict \cref{con:KLM} or \cref{con:KLM-regular}.
It has $6$ vertices and minimum degree $2$, whereas \cref{con:KLM} requires minimum degree at least $5/2$ in dimension $1$, and \cref{con:KLM-regular} applies only when \mbox{$\delta\geq 3$}.
We continue to believe both conjectures.

For a graph $F$, we write $\overline F$ for its complement.
We also call a set $S\subseteq V(F)$ \defn{admissible} if $F-S$ has maximum degree at most two
and has no connected component isomorphic to $C_3$ or $C_4$.
Define
\[
  \sigma(F)=\min\{|S|:S\subseteq V(F)\text{ is admissible}\}.
\]
Thus $|V(F)|-\sigma(F)$ is the maximum order of an induced subgraph of $F$
whose components are paths, possibly trivial, or cycles of length at least five.

\begin{corollary}\label{cor:exceptional-components}
  Let $n\geq 4$, and let $G$ be an $n$-vertex graph with $\delta(G)\geq n-3$.
  Then
  \[
    \rig(G)\geq\maxwell(G)-\bigl\lceil\sigma(\overline G)/2\bigr\rceil.
  \]
\end{corollary}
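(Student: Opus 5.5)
The plan is to reduce to \cref{thm:2-reg} by modifying the complement $F=\overline G$ so that it has no $C_3$- or $C_4$-components, and then to pay for the modification in the rigidity dimension. Since $\delta(G)\ge n-3$, we have $\Delta(F)\le 2$, so $F$ is a disjoint union of paths and cycles. A set $S$ is admissible exactly when it meets every $C_3$- and $C_4$-component. Hence $\sigma(F)$ equals the number $\sigma$ of such exceptional components, and each of them needs exactly one vertex of $S$.

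I would first record the edge-count form of Maxwell's condition. For an $n$-vertex graph whose complement has $m$ edges, $dn-\binom{d+1}{2}\le\binom n2-m$ is equivalent to $\binom{n-d}{2}\ge m$. So $\maxwell(G)$ depends only on $n$ and $e(F)$. This suggests keeping $n$ and $e(F)$ fixed while removing the exceptional components.

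Next I would pair up the exceptional components, with one left over if $\sigma$ is odd. For a pair $C,C'$, delete one edge $ab$ of $C$ and one edge $xw$ of $C'$, and add the two cross edges $ax$ and $bw$. This merges $C\cup C'$ into a single cycle of length between $6$ and $8$. For a leftover exceptional component, I would merge it in the same way with any other component, or, if $\sigma=n/3$ and everything is exceptional, with a neighbouring exceptional component; the ceiling in the bound accounts for this odd case. The resulting $F''$ has maximum degree at most $2$, no $C_3$ or $C_4$ component, and $e(F'')=e(F)$. So $G''=\overline{F''}$ satisfies $\rig(G'')=\maxwell(G'')=\maxwell(G)$ by \cref{thm:2-reg}.

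Now $G$ contains $G''$ minus the added cross edges, which are $2$ per merged pair, and each merged pair must cost at most one dimension. The step I expect to be the main obstacle is a deletion lemma of the following kind: if $H$ is generically $d$-rigid and $e,f$ are two edges of $H$ whose endpoints lie in a bounded set of vertices of high degree, then $H-\{e,f\}$ is generically $(d-1)$-rigid. I would first try to prove it via coning. Choose a vertex $v$ that is incident to both deleted edges, arranging the merge so that both cross edges share an endpoint, for instance by reconnecting through a path rather than a cycle. Then $H-v$ is nearly complete, and one compares $H-\{e,f\}$ with a cone over a graph to which \cref{thm:2-reg} applies in dimension $d-1$, using that coning raises the rigidity dimension by exactly one. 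If that fails, my fallback is to apply the theorem of Hewetson, Jackson, Nixon and Smith (\cref{thm:hjns2}) directly to the merged configuration in dimension $\maxwell(G)-\ceil{\sigma/2}$, checking its non-twin hypothesis there. Iterating over the $\ceil{\sigma/2}$ pairs then gives $\rig(G)\ge\maxwell(G)-\ceil{\sigma(\overline G)/2}$.
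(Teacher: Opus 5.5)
\textbf{There is a genuine gap.} The whole content of the corollary is that two exceptional components cost only one dimension. You place this in a deletion lemma that you do not prove, and the mechanism you suggest for it cannot work.

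First, the bookkeeping is off. The cross edges $ax,bw$ are edges of $F''$, so they are \emph{not} in $G''$. In fact $G=G''-\{ab,xw\}+\{ax,bw\}$, so what you must delete from $G''$ are $ab$ and $xw$. These lie in different components of $F$ and hence are vertex-disjoint. This is forced.

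\begin{itemize}
\item If the two deleted edges shared a vertex $u$, both would lie in the component of $u$. Your sketch (pass to $G''-u$, which is $(d-1)$-rigid by \cref{thm:coning}, then re-add $u$ by \cref{lem:zero-extension}) then pays one dimension per exceptional component. That gives only $\maxwell(G)-\sigma$, which you get trivially by deleting one vertex from each exceptional component and $0$-extending.
\item For two disjoint edges, the lemma as you state it is false. For $d\geq 3$, $K_{d+1}$ is $d$-rigid and every vertex has maximum degree. But $K_{d+1}$ minus two disjoint edges has $\binom{d+1}{2}-2$ edges, fewer than the $\binom{d+1}{2}-1=(d-1)(d+1)-\binom{d}{2}$ needed for $(d-1)$-rigidity. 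For $d=3$ this is $K_4$ minus a perfect matching, i.e.\ $C_4$.
\end{itemize}

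So a genuinely new rigidity input is required. The fallback ``apply \cref{thm:hjns2} directly'' does not say what it is.

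\textbf{The missing idea} is to treat one vertex from each of two exceptional components simultaneously, as a cone followed by a vertex split. The paper's argument runs as follows, with $t=\sigma(\overline G)$.
\begin{itemize}
\item Delete a set $S$ consisting of one vertex from each $C_3$- or $C_4$-component. Then $H=G-S$ is Maxwell-sharp by \cref{thm:2-reg}, and $\maxwell(H)\geq\maxwell(G)-t$ by \eqref{eq:maxwell-deletion}.
\item Put the vertices of $S$ back two at a time. Two vertices from different components of $F$ are adjacent in $G$ and have no common non-neighbour. So adding them is coning the current graph and then splitting the cone vertex. By \cref{thm:coning,lem:vertex-splitting} this raises the rigidity dimension by one per pair (\cref{lem:paired-extension}).
\item A leftover vertex is added by $0$-extension.
\end{itemize}
This gives $\maxwell(G)-t+\lfloor t/2\rfloor=\maxwell(G)-\lceil t/2\rceil$.

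This cone-and-split step would also prove your two-edge lemma if you simply deleted one edge from each exceptional component. That already turns it into a path without increasing $e(F)$. You would then split at one endpoint of each deleted edge; with that choice the argument is exactly the paper's.

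With your merged cycles the same step is blocked. In $\overline{G''-\{ab,xw\}}=F+ax+bw$ the vertices $a,b,w,x$ span a $4$-cycle. So any pair consisting of an endpoint of $ab$ and an endpoint of $xw$ is at distance at most $2$, and \cref{lem:paired-extension} cannot be applied.
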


Using a theorem of Joret and Petit~\cite{JP25} on induced linear forests,
we also obtain the following bound for complements of bounded degree.

\begin{corollary}\label{cor:bounded-complement-degree}
  Let $2\leq\Delta=o(\sqrt n)$,
  and let $G$ be an $n$-vertex graph with $\delta(G)\geq n-1-\Delta$.
  Then
  \[
    \rig(G)\geq\frac n2+\frac{n}{\Delta+1}-O(\sqrt n+\Delta^2).
  \]
\end{corollary}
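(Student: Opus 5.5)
We prove \cref{cor:bounded-complement-degree} by reducing it to \cref{thm:2-reg}: delete a large vertex set from $\overline G$ so that what remains is admissible, apply \cref{thm:2-reg} to the complement of that remainder, and then add the deleted vertices back using a coning argument.

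Put $F=\overline G$. Then $F$ has maximum degree at most $\Delta$. By the theorem of Joret and Petit~\cite{JP25}, $F$ contains an induced linear forest $L$ (every component a path) on at least $\frac{2n}{\Delta+1}-O(1)$ vertices. Paths are admissible, so the set $S=V(F)\sm V(L)$ satisfies
\[
  |S|\le n-\frac{2n}{\Delta+1}+O(1).
\]
If the precise form of the Joret--Petit bound is weaker, we may lose a further additive $O(\Delta)$ term, and that is acceptable.

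Let $m=n-|S|$ and $H=G-S=\overline{L}$. Then $H$ has minimum degree at least $m-3$, and its complement $L$ has no $C_3$ or $C_4$ component, so \cref{thm:2-reg} gives $\rig(H)=\maxwell(H)$. We next add the vertices of $S$ back one at a time. A vertex $v\in S$ is adjacent in $G$ to all but at most $\Delta$ of the current vertices. Adding a vertex joined to all current vertices is coning, and by Whiteley's coning theorem coning raises the rigidity dimension by one. Deleting at most $\Delta$ of the edges at the new vertex can destroy that gain, though. To control this we use a vertex-addition (0-extension) argument: if $K$ is generically $d$-rigid and $v$ has at least $d$ neighbours in $K$, then $K+v$ is generically $d$-rigid. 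With that in hand, the plan is to show that each vertex of $S$ contributes about $1/2$ a dimension on average, which yields
\[
  \rig(G)\ \ge\ \maxwell(H)+\frac{|S|}{2}-O(\Delta^2)
\]
or something similar. The cleanest route is to handle the vertices of $S$ in pairs $u,v$ that are adjacent in $G$: cone $u$ and then $v$, raising the dimension by one per pair, and absorb the at most $2\Delta$ missing edges by sacrificing a bounded amount of dimension in total. An alternative route is to go back into the argument behind \cref{cor:exceptional-components}, which already gives a loss of $\lceil\sigma/2\rceil$ for $\sigma$ deleted vertices when $\Delta=2$. The analogue for larger $\Delta$ would lose $\lceil|S|/2\rceil+O(\Delta^2)$, where the $O(\Delta^2)$ term pays for the bounded-degree deficiency.

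It remains to compute. By Maxwell's count for the near-complete graph $H$ on $m$ vertices, $\maxwell(H)=m-O(\sqrt m)$, since $|E(H)|\ge\binom m2-m$. Hence
\[
  \rig(G)\ \ge\ m-O(\sqrt n)+\frac{n-m}{2}-O(\Delta^2)=\frac n2+\frac m2-O(\sqrt n+\Delta^2),
\]
and substituting $m\ge 2n/(\Delta+1)-O(1)$ gives exactly $\frac n2+\frac{n}{\Delta+1}-O(\sqrt n+\Delta^2)$.

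The main obstacle is the addition step. We must show that adding the $|S|$ vertices, each missing up to $\Delta$ neighbours, gains $|S|/2-O(\Delta^2)$ dimensions rather than nothing. I would first try to generalise whatever coning/pairing mechanism proves \cref{cor:exceptional-components}: pair up the vertices of $S$ and check that the missing edges cause only a bounded total loss, plausibly using that $\Delta=o(\sqrt n)$ keeps the number of problematic pairs small.
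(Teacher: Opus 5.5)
There is a genuine gap at the step you flag as the main obstacle. Your outline has the same shape as the paper's argument: an induced linear forest from Joret--Petit, \cref{thm:2-reg} applied to $H=\overline L$, then re-adding $S$ while gaining about $|S|/2$ dimensions, then the final arithmetic. But re-adding $S$ carries essentially all of the content, and the mechanism you sketch for it would not work. With only coning and $0$-extension available, a new vertex that misses even one current vertex gives no gain: $0$-extension (\cref{lem:zero-extension}) only preserves the dimension, and \cref{thm:coning} needs a full cone. The paper's \cref{lem:remove-one,lem:remove-two} recover the gain only when at most two cone edges are missing, and only given excess edges; nothing of that kind handles up to $\Delta$ missing edges. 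So ``cone $u$ and then $v$, absorbing the $2\Delta$ missing edges at a bounded total cost'' is not an argument. Pairing vertices merely adjacent in $G$ is also not the right condition.

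The missing idea is Whiteley's vertex splitting applied to the cone vertex, which is \cref{lem:paired-extension}.
\begin{enumerate}[(i)]
\item If the current graph is $(d+i-1)$-rigid, cone it to get a $(d+i)$-rigid graph, then split the cone vertex into $x_i,y_i$ using \cref{lem:vertex-splitting}.
\item For the result to be an induced subgraph of $G$, you need $x_iy_i\in E(G)$ and $N_G(x_i)\cup N_G(y_i)$ to contain every current vertex. So $x_i,y_i$ must have no common neighbour in $F$, i.e.\ be at distance at least $3$ in $F$.
\item You need at least $d+i-1$ common neighbours, which is why the starting dimension is capped at $\min\{\maxwell(H),m-2\Delta\}$. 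This cap costs only $O(\sqrt n)$, since $\Delta=o(\sqrt n)$.
\item You need a counting step showing that almost all of $S$ can be paired this way. In the auxiliary graph on $S$ joining vertices at $F$-distance at least $3$, each vertex is non-adjacent to at most $\Delta^2$ others. Hence a maximal matching leaves at most $\Delta^2+1$ vertices uncovered, and these are added by $0$-extension with no gain (\cref{thm:structural-bound}).
\end{enumerate}
The last step, not any per-pair loss, is where the $O(\Delta^2)$ term comes from. Your alternative route via \cref{cor:exceptional-components} points at the right lemma. However, there the pairing is free, because vertices taken from distinct $C_3$- or $C_4$-components are automatically far apart in $F$. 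For general $\Delta$ you need the matching argument above.
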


The following asymptotic consequence of \cref{con:KLM} may be a more approachable first step in its high-dimensional regime.
\begin{conjecture}\label{con:bounded-complement-degree}
  Let $D=D(n)=o(n)$.
  Then, uniformly over all $n$-vertex graphs $F$ with $\Delta(F)\leq D$,
  \[
    \rig(\overline{F})=(1-o(1))n.
  \]
\end{conjecture}

For comparison, Jord\'an, Liu and Vill\'anyi~\cite{JLV25+} proved that the condition $\delta(G)\geq n/2+q-1$ suffices for generic $q$-rigidity.
Under the minimum-degree assumption in \cref{cor:bounded-complement-degree},
their theorem guarantees generic $q$-rigidity only for $q\leq\floor{n/2}-\Delta$ using this hypothesis alone.
Indeed, since $\delta(G)\leq n-1$,
their condition can never certify rigidity in a dimension greater than $\floor{n/2}$,
regardless of how dense the graph is.
By contrast, if $\Delta=o(n^{1/3})$,
then \cref{cor:bounded-complement-degree} gives rigidity in dimension
\[
  \frac n2+(1-o(1))\frac{n}{\Delta+1}.
\]
For the graphs in \cref{cor:exceptional-components},
taking $\Delta=2$ in \cref{cor:bounded-complement-degree} gives generic rigidity in dimension $5n/6-O(\sqrt n)$ uniformly.
On the other hand,
\cref{cor:exceptional-components} retains the dependence on $\sigma(\overline G)$.
Since $\maxwell(G)=n-O(\sqrt n)$,
it gives generic rigidity in dimension
$n-\lceil\sigma(\overline G)/2\rceil-O(\sqrt n)$.
More precisely, \cref{cor:exceptional-components} gives
\mbox{$\maxwell(G)-\rig(G)\leq\lceil\sigma(\overline G)/2\rceil$}.

\medskip

The paper is organised as follows.
In \cref{sec:rigid-graphs} we collect a few rigidity results that we will need.
In \cref{sec:two-factors} we prove \cref{thm:2-reg}.
In \cref{sec:bounded-degree} we apply our results to complements of graphs of bounded degree, deducing in particular \cref{cor:exceptional-components} and \cref{cor:bounded-complement-degree}.

\section{Rigidity preliminaries}\label{sec:rigid-graphs}

Let $\mathcal{M}=(E,r)$ be a matroid with finite ground set $E$ and rank function $r$.
A \defn{circuit} of $\mathcal{M}$ is a set $C\subseteq E$ such that
$r(C)=|C|-1=r(C-e)$ for all $e\in C$.

Let $G=(V,E)$ be a graph.
A \defn{($d$-dimensional) framework} of $G$ is a pair $(G,p)$,
where $p:V\to \mathbb{R}^d$.
The framework is \defn{rigid} if every continuous motion of the vertices
preserving the lengths of the edges preserves all pairwise distances.
The map $p$ is \defn{generic} if the $d|V|$ coordinates of $p(V)$
are algebraically independent over $\mathbb Q$.
We say that $G$ is \defn{generically $d$-rigid}
if $(G,p)$ is rigid for some, equivalently every, generic map
$p:V\to \mathbb{R}^d$.
When no confusion is possible,
we simply say that $G$ is \defn{$d$-rigid}.
It is standard that if $G$ is $d$-rigid,
then it is $d'$-rigid for every $0\leq d'\leq d$.

Define the \defn{rigidity matrix} $R(G,p)$ of a $d$-dimensional framework $(G,p)$
to be the $|E|\times d|V|$ matrix whose rows are indexed by the edges,
and $d$-tuples of columns are indexed by the vertices.
The row for an edge $e=uv$ is given by:
\[\begin{pmatrix} 0 & \cdots & 0 & p(u)-p(v) & 0 & \cdots & 0 & p(v)-p(u) & 0 & \cdots & 0  \end{pmatrix}\]
where $p(u)-p(v)$ occurs in the $d$-tuple of columns indexed by $u$,
$p(v)-p(u)$ occurs in the $d$-tuple of columns indexed by $v$.
Maxwell~\cite{Max90} observed that
$\operatorname{rank}(R(G,p))\leq d|V|-\binom{d+1}{2}$
whenever $p$ affinely spans $\mathbb{R}^d$,
and a fundamental result of Asimow and Roth~\cite{AR78} tells us that,
when $p$ is generic and $G$ has at least $d+1$ vertices,
$(G,p)$ is rigid if and only if
$\operatorname{rank}(R(G,p))=d|V|-\binom{d+1}{2}$.

The \defn{$d$-dimensional rigidity matroid} of a graph $G=(V,E)$
is the matroid $\mathcal{R}_d(G)$ on $E$ in which a set of edges $F\subseteq E$
is independent if the corresponding rows of $R(G,p)$ are independent,
for some (or equivalently every) generic $p:V\to \mathbb{R}^d$.
We denote the rank of $\mathcal{R}_d(G)$ by $r_d(G)$.

We will simplify terminology by describing $G$ using properties of its edge set
in $\mathcal{R}_d(G)$.
For example we say that $G$ is:
\defn{$\mathcal{R}_d$-independent} if $r_d(G)=|E|$;
\defn{$\mathcal{R}_d$-rigid} if $G$ is a complete graph on at most $d+1$ vertices
or $r_d(G)=d|V|-\binom{d+1}{2}$;
\defn{minimally $\mathcal{R}_d$-rigid} if $G$ is $\mathcal{R}_d$-rigid
and $\mathcal{R}_d$-independent;
and an \defn{$\mathcal{R}_d$-circuit} if $G$ is not $\mathcal{R}_d$-independent
but $G-e$ is $\mathcal{R}_d$-independent for all $e\in E$.
If $G$ is $\mathcal{R}_d$-rigid with at least $d+1$ vertices,
we say that it has $|E(G)|-d|V(G)|+\binom{d+1}{2}$ \defn{excess edges}.
When no confusion is possible, we write
\defn{$d$-independent}, \defn{minimally $d$-rigid}, and \defn{$d$-circuit}
for the corresponding notions in $\mathcal{R}_d(G)$.
We say that a circuit $C$ \defn{meets} a vertex $v$ if there is some edge
$e$ with $v\in e\in C$.

\begin{lemma}[$0$-extension; see, for example, \cite{TW85}]
\label{lem:zero-extension}
  Let $G=(V,E)$ be a $d$-rigid graph,
  and let $G'$ be obtained from $G$ by adding a new vertex $v$
  adjacent to at least $d$ vertices of $V$.
  Then $G'$ is $d$-rigid.
\end{lemma}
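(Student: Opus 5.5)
The plan is to argue directly with the rigidity matrix at a generic realisation $p$ of $G'$ in $\mathbb{R}^d$. First we dispose of the degenerate case. If $G$ has at most $d$ vertices, then $\mathcal{R}_d$-rigidity means $G$ is complete. The new vertex $v$ is adjacent to at least $d$ vertices of $V$, so when $|V|\le d-1$ no such $v$ can exist unless $|V|\geq d$. Hence either $|V|=d$ and $G'=K_{d+1}$, which is rigid by definition, or $|V|\ge d+1$. In the latter case $G$ has full rank $d|V|-\binom{d+1}{2}$, and it suffices to show that the rank of $R(G',p)$ increases by exactly $d$.

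For the main step, choose $d$ neighbours $u_1,\dots,u_d$ of $v$ in $V$, and keep only the $d$ edges $vu_1,\dots,vu_d$; any further edges at $v$ can only raise the rank. Order the columns so that the $d$ columns of $v$ come last. The rows of $E(G)$ are zero in the columns of $v$, and restricted to the remaining columns they are exactly $R(G,p|_V)$, of rank $d|V|-\binom{d+1}{2}$. The new rows restricted to the columns of $v$ form the $d\times d$ matrix with rows $p(v)-p(u_i)$. The resulting matrix is block triangular, so its rank is at least the sum of the ranks of the diagonal blocks. It therefore suffices to show that the vectors $p(v)-p(u_1),\dots,p(v)-p(u_d)$ are linearly independent.

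This independence is the only real point. It holds because the points $p(v),p(u_1),\dots,p(u_d)$ are $d+1$ points of a generic configuration, and $d+1$ generic points are affinely independent. Indeed, the determinant of the matrix with rows $p(v)-p(u_i)$ is a nonzero polynomial with rational coefficients in the coordinates: specialise $p(v)=0$ and $p(u_i)=-e_i$ to see it is not identically zero. Algebraic independence of the coordinates then forces this determinant to be nonzero.

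Combining the two blocks gives
\[
r_d(G')\ge d|V|-\binom{d+1}{2}+d=d|V(G')|-\binom{d+1}{2},
\]
which equals Maxwell's upper bound. By Asimow--Roth, $G'$ is therefore $d$-rigid. I expect no genuine obstacle here; the only care needed is the small-vertex convention and the genericity argument for the $d\times d$ block.
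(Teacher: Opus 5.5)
Your proof is correct: the degenerate case $|V|\le d$ is handled properly. For $|V|\ge d+1$, the block-triangular form of $R(G',p)$ together with genericity (which makes the $d\times d$ block with rows $p(v)-p(u_i)$ nonsingular) gives the full rank $d|V(G')|-\binom{d+1}{2}$. The paper gives no proof of its own and simply cites this standard fact (e.g.\ Tay--Whiteley), so there is nothing to compare against; your rank-counting argument is the usual textbook proof of it.
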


We will also use the following form of Whiteley's vertex-splitting theorem.
\begin{lemma}[Vertex splitting; \cite{Whi90}*{Corollary~11}]
\label{lem:vertex-splitting}
  Let $G=(V,E)$ be a $d$-rigid graph,
  and let $v\in V$.
  Let $G'$ be obtained from $G$ by removing $v$ and adding two adjacent vertices $x$ and $y$ such that
  \[
    N_{G'}(x)\cup N_{G'}(y)=N_G(v)\cup\{x,y\}
  \]
  and $|N_{G'}(x)\cap N_{G'}(y)|\geq d-1$.
  Then $G'$ is $d$-rigid.
\end{lemma}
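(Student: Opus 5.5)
The plan is to prove the lemma through infinitesimal rigidity: I will take a degenerate limit framework in which $x$ and $y$ sit at the same point, and then use lower semicontinuity of matrix rank. Fix a generic $p:V\to\mathbb{R}^d$. Write $C=(N_{G'}(x)\cap N_{G'}(y))\sm\{x,y\}$, so $|C|\geq d-1$. Choose a direction $w\in\mathbb{R}^d$ to be specified later, and for $t>0$ define $q_t$ on $V'=(V\sm\{v\})\cup\{x,y\}$ by $q_t(u)=p(u)$ for $u\neq x,y$, $q_t(x)=p(v)$ and $q_t(y)=p(v)+tw$. In $R(G',q_t)$, divide the row of the edge $xy$ by $t$; this does not change the rank. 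As $t\to 0$ the matrices converge to a limit matrix $R_0$. In $R_0$ the points of $x$ and $y$ coincide at $p(v)$, the row of $xy$ becomes $(w,-w)$ in the columns of $x,y$, and every other row is the row it would have with $x$ or $y$ placed at $p(v)$.

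The key step is to show that $R_0$ has rank $d|V'|-\binom{d+1}{2}$, i.e.\ that its kernel consists only of trivial infinitesimal motions. Let $m$ be in the kernel of $R_0$.
\begin{itemize}
\item For each $c\in C$, the rows of $xc$ and $yc$ give $(p(v)-p(c))\cdot(m_x-m_c)=0=(p(v)-p(c))\cdot(m_y-m_c)$. Subtracting, $m_x-m_y$ is orthogonal to $p(v)-p(c)$.
\item The row of $xy$ gives $w\cdot(m_x-m_y)=0$.
\end{itemize}
Because $p$ is generic and $|C|\geq d-1$, the vectors $p(v)-p(c)$ for $c\in C$ span a subspace of dimension $d-1$. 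Choosing $w$ outside this hyperplane, we get $m_x=m_y$. Now set $m_v:=m_x$. Every edge $vu$ of $G$ appears as $xu$ or $yu$ in $G'$, since $N_{G'}(x)\cup N_{G'}(y)=N_G(v)\cup\{x,y\}$. Hence $m$ restricts to an infinitesimal motion of the generic framework $(G,p)$. As $G$ is $d$-rigid, this motion is trivial, that is, the restriction of an isometry motion $u\mapsto Au+b$. Since $x$ and $y$ sit at $p(v)$ with $m_x=m_y=m_v$, the motion $m$ agrees with the same trivial motion on all of $V'$. So $\dim\ker R_0\leq\binom{d+1}{2}$. I also need the points $q_0(V')$ to affinely span $\mathbb{R}^d$, so that trivial motions contribute exactly $\binom{d+1}{2}$; this follows from genericity of $p$ when $|V|\geq d+1$.

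Finally, rank is lower semicontinuous, so $R(G',q_t)$ has rank at least $d|V'|-\binom{d+1}{2}$ for all small $t>0$. Generic frameworks attain the maximum rank over all realisations, so the same holds for generic realisations of $G'$. By Maxwell's bound this rank equals $d|V'|-\binom{d+1}{2}$, and $G'$ is $d$-rigid by Asimow--Roth.

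The main obstacle is making sure the $xy$ row genuinely contributes in the limit, which is exactly why the rescaling by $1/t$ and the choice of $w$ transverse to the span of $\{p(v)-p(c)\}$ are needed; without $|C|\geq d-1$ the difference $m_x-m_y$ would not be pinned down. A secondary point is the degenerate case where $G$ is a complete graph on at most $d+1$ vertices. There I would check directly that $G'$ is either complete on at most $d+1$ vertices or is $K_{d+2}$ minus at most one edge, which is $d$-rigid. Alternatively, I would run the same argument with the appropriate rank count $\binom{|V'|}{2}$.
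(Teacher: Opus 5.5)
Your argument is correct. The paper gives no proof of its own here and simply imports the result from Whiteley's Corollary 11. Your argument is essentially the standard proof of that theorem: place $x$ and $y$ together at $p(v)$, rescale the $xy$ row by $1/t$, choose $w$ transverse to the span of the vectors $p(v)-p(c)$, then pass to nearby generic positions by lower semicontinuity of rank.

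Two small cleanups. When $|C|\geq d$, the vectors $p(v)-p(c)$ already span $\mathbb{R}^d$, so no choice of $w$ is needed. You also do not need $q_0(V')$ to affinely span $\mathbb{R}^d$, since you only use the upper bound $\dim\ker R_0\leq\binom{d+1}{2}$ before applying Maxwell's bound to a generic realisation of $G'$.
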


\subsection{Coning}

The \defn{cone} $\hat G$ of a graph $G$ is obtained by adding a new vertex
and joining it to every vertex of $G$.
We will use classical and recent results about coning.

\begin{theorem}[\cite{Whi83}]\label{thm:coning}
  Let $\hat G$ be the cone of a graph $G$.
  Then $r_{d+1}(\hat G)=r_d(G)+|V(G)|$.
  In particular, $\hat G$ is $(d+1)$-rigid if and only if $G$ is $d$-rigid.
\end{theorem}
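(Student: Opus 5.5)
The plan is to prove the rank identity by comparing spaces of infinitesimal motions, that is, kernels of rigidity matrices. The key point is to place the cone vertex at the origin and to choose the base points in a way that stays generic enough. Let $n=|V(G)|$ and write $o$ for the cone vertex. Throughout I will use the quantity
\[
\dim\ker R(\hat G,q)=(d+1)(n+1)-\operatorname{rank}R(\hat G,q).
\]

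First I reduce to a special position. Take a generic $q:V\cup\{o\}\to\mathbb R^{d+1}$. Translating by $-q(o)$ does not change the rank, so I may take $q(o)=0$. The remaining coordinates $q(v)$, $v\in V$, are still algebraically independent. In particular each last coordinate $\lambda_v:=q(v)_{d+1}$ is nonzero. Write $q(v)=\lambda_v(p(v),1)$ with $p(v)\in\mathbb R^d$. The $dn$ ratios forming $p$ are algebraically independent over $\mathbb Q$, because together with the $\lambda_v$ they generate the same field as the coordinates of $q$, and this field has transcendence degree $(d+1)n$. Hence $p$ is a generic realisation of $G$ in $\mathbb R^d$, and $\operatorname{rank}R(G,p)=r_d(G)$.

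Next I compute the kernel. Let $(u_o,(u_v)_{v\in V})$ be an infinitesimal motion of $(\hat G,q)$. Subtracting the translation by $u_o$, I may assume $u_o=0$. This accounts for exactly $d+1$ dimensions of the kernel.
\begin{itemize}
\item The cone edges $ov$ then give $q(v)\cdot u_v=0$.
\item For an edge $uv$ of $G$, the constraint $(q(u)-q(v))\cdot(u_u-u_v)=0$ simplifies, using the cone constraints, to $q(u)\cdot u_v+q(v)\cdot u_u=0$.
\end{itemize}
Now substitute $u_v=\lambda_v u'_v$ and write $\hat p(v)=(p(v),1)$. The conditions become
\[
\hat p(v)\cdot u'_v=0 \quad (v\in V), \qquad \hat p(u)\cdot u'_v+\hat p(v)\cdot u'_u=0 \quad (uv\in E(G)),
\]
after dividing out the nonzero factor $\lambda_u\lambda_v$. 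Decompose $u'_v=(w_v,t_v)$ with $w_v\in\mathbb R^d$. The first condition forces $t_v=-p(v)\cdot w_v$, so $t_v$ is determined by $w_v$. Substituting this into the second condition gives $p(u)\cdot w_v+p(v)\cdot w_u-p(u)\cdot w_u-p(v)\cdot w_v=0$, which is exactly $(p(u)-p(v))\cdot(w_u-w_v)=0$. Thus $w\mapsto u$ is a linear bijection from $\ker R(G,p)$ onto the motions of $(\hat G,q)$ with $u_o=0$. It follows that
\[
\dim\ker R(\hat G,q)=(d+1)+\dim\ker R(G,p)=(d+1)+dn-r_d(G).
\]
Hence $r_{d+1}(\hat G)=(d+1)(n+1)-(d+1)-dn+r_d(G)=r_d(G)+n$.

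For the rigidity statement, note that $\hat G$ has $n+1$ vertices. The condition $r_{d+1}(\hat G)=(d+1)(n+1)-\binom{d+2}{2}$ is equivalent to $r_d(G)=dn-\binom{d+1}{2}$, by the identity $\binom{d+2}{2}=\binom{d+1}{2}+d+1$. The small cases, where $G$ or $\hat G$ is complete on at most $d+1$, respectively $d+2$, vertices, should be checked separately; there both sides hold trivially.

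The main obstacle is the genericity bookkeeping in the first step: the cone framework is in special position, and its rank must still equal the generic rank. This is handled by starting from a generic $q$ and only translating and rescaling, so that the ranks are preserved exactly. The rescalings are absorbed by the substitution $u_v=\lambda_v u'_v$, and the derived $p$ is still generic.
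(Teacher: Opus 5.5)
Your proof is correct. The paper gives no proof of this theorem and simply cites Whiteley; your argument is essentially the classical one behind that citation. You place the cone vertex at the origin, centrally project the remaining points onto the hyperplane $x_{d+1}=1$, and identify infinitesimal motions of $(\hat G,q)$ that fix the cone vertex with infinitesimal motions of the projected framework $(G,p)$.

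Your genericity bookkeeping is sound:
\begin{itemize}
\item Translation leaves $R(\hat G,q)$ literally unchanged.
\item The scalings $\lambda_v$ are absorbed by the substitution $u_v=\lambda_v u'_v$.
\item The ratios defining $p$ are algebraically independent, by the transcendence-degree count.
\end{itemize}

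Your remark about small cases can be made precise. $G$ is complete on at most $d+1$ vertices exactly when $\hat G$ is complete on at most $d+2$ vertices. So the two clauses in the definition of rigidity correspond one to one, and the rank identity together with $\binom{d+2}{2}=\binom{d+1}{2}+d+1$ handles everything else.
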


Combining coning, vertex splitting and 0-extension gives the following.
\begin{lemma}\label{lem:paired-extension}
  Let $G$ be a graph, let $S\subseteq V(G)$,
  and put $H=G-S$, $n'=|V(H)|$ and $F=\overline G$.
  Suppose that $H$ is $d$-rigid,
  where $0\leq d\leq n'-2\Delta(F)$.
  Let $M$ be a matching on $S$ such that the ends of every edge of $M$
  have distance at least $3$ in $F$.
  Then $G$ is $(d+|M|)$-rigid.
\end{lemma}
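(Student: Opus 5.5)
Let the edges of $M$ be $x_1y_1,\dots,x_my_m$ with $m=|M|$. We will add the pairs one at a time, combining coning with vertex splitting and $0$-extension as the section heading suggests. We may assume $M$ covers $S$; otherwise the uncovered vertices of $S$ are added at the very end by $0$-extension (\cref{lem:zero-extension}), as explained below. Let $S_0=\emptyset$ and $S_i=\{x_1,y_1,\dots,x_i,y_i\}$, and put $G_i=G[V(H)\cup S_i]$, so $G_0=H$. We show by induction on $i$ that $G_i$ is $(d+i)$-rigid.

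\textbf{Inductive step.} Suppose $G_{i-1}$ is $(d+i-1)$-rigid. By \cref{thm:coning}, the cone $\hat G_{i-1}$, with apex $v$, is $(d+i)$-rigid. Split $v$ into $x_i$ and $y_i$ using \cref{lem:vertex-splitting}. The vertex $x_i$ receives $y_i$ together with its $G$-neighbours in $V(G_{i-1})$, and likewise for $y_i$. Now check the two hypotheses of the splitting lemma.

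First, $x_iy_i\in E(G)$, since the ends of an edge of $M$ are not adjacent in $F$.

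Second, every vertex $u\in V(G_{i-1})$ is adjacent in $G$ to $x_i$ or to $y_i$. Otherwise $u$ would be a common $F$-neighbour of $x_i$ and $y_i$, placing them at $F$-distance $2$, which the hypothesis forbids. So $N(x_i)\cup N(y_i)=N_{\hat G_{i-1}}(v)\cup\{x_i,y_i\}$. This union condition is the only point where the distance condition is used.

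The remaining condition is the size of the common neighbourhood in $G_{i-1}$. The vertices of $G_{i-1}$ missed by $x_i$ or by $y_i$ number at most $2\Delta(F)$, so
\[
|N(x_i)\cap N(y_i)|\ \geq\ n'+2(i-1)-2\Delta(F)\ \geq\ d+2(i-1)\ \geq\ (d+i)-1,
\]
using $d\le n'-2\Delta(F)$ and $i\geq1$. The lemma therefore applies and $G_i$ is $(d+i)$-rigid.

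\textbf{Leftover vertices.} Finally, add each vertex $w$ of $S$ not covered by $M$ by $0$-extension. Such a $w$ has at least $|V|-1-\Delta(F)$ neighbours among the vertices already present. This is at least $d+m$ as long as $n'+2m-\Delta(F)\geq d+m$, which follows from $d\le n'-2\Delta(F)$. Each further $0$-extension only increases the number of vertices present, so the count persists.

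The degenerate small cases need care. The rigidity notions include complete graphs on at most $d+1$ vertices, so we must confirm that coning and splitting behave correctly there, for instance when $n'$ is small. This is routine by the convention that every graph is $0$-rigid and by \cref{thm:coning}.

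I expect the common-neighbour count in the splitting step to be the main obstacle. It is where the bound $d\le n'-2\Delta(F)$ is needed, and one must check that the count has slack uniformly in $i$. It does, since each step adds two vertices but raises the dimension by only one.
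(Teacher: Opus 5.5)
Your proof is correct and follows essentially the same route as the paper: add the matched pairs one at a time by coning and then vertex splitting, using the common-neighbour bound $n'+2i-2-2\Delta(F)\geq d+i-1$, and then add the unmatched vertices of $S$ by $0$-extension. The paper additionally settles the case $d=0$, $i=1$ directly by observing that $G_1$ is connected; your appeal to the coning and splitting lemmas in dimension $1$ covers this case equally well.
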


\begin{proof}
  Put $m=|M|$ and $k=|S\sm V(M)|$.
  If $d=m=0$, the result follows from the convention that every graph is $0$-rigid.
  Write $M=\{x_iy_i:1\leq i\leq m\}$
  and $S\sm V(M)=\{z_1,\ldots,z_k\}$.
  Put $G_0=H$ and, for $1\leq i\leq m$, put
  \[
    G_i=G[V(H)\cup\{x_j,y_j:1\leq j\leq i\}].
  \]
  We show inductively that $G_i$ is $(d+i)$-rigid.
  Suppose that $1\leq i\leq m$ and that $G_{i-1}$ is $(d+i-1)$-rigid.
  Since $x_i$ and $y_i$ have distance at least $3$ in $F$,
  they are non-adjacent and have no common neighbour in $F$.
  Thus they are adjacent in $G$,
  and their neighbourhoods in $G$ cover $V(G_{i-1})$.
  If $d=0$ and $i=1$, then $G_1$ is connected and hence $1$-rigid.
  Otherwise, $x_i$ and $y_i$ have at least
  $n'+2i-2-2\Delta(F)\geq d+i-1$ common neighbours in $V(G_{i-1})$.
  Thus \cref{thm:coning,lem:vertex-splitting} show that $G_i$ is $(d+i)$-rigid.

  For $1\leq i\leq k$, put
  \[
    G_{m+i}=G[V(G_m)\cup\{z_1,\ldots,z_i\}].
  \]
  Each $z_i$ has at least $n'+2m-\Delta(F)\geq d+m$ neighbours in $G_{m+i-1}$.
  Hence \cref{lem:zero-extension} implies inductively that every $G_{m+i}$ is $(d+m)$-rigid.
  Since $G_{m+k}=G$, the result follows.
\end{proof}

\begin{theorem}[\cite{HJNS25+}]\label{thm:hjns1}
  Let $G$ be a graph on at least $d$ vertices,
  let $\hat G$ be obtained from $G$ by coning with new vertex $v$
  and let $\hat G'$ be obtained from $\hat G$
  by deleting $vs_1$ for some $s_1\in V(G)$.
  Then $\hat G'$ is minimally $(d+1)$-rigid if and only if $G$ is $d$-rigid,
  $|E(G)| = d|V(G)|-\binom{d+1}{2}+1$ and
  $G-s_1$ is $d$-independent.
\end{theorem}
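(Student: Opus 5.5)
The plan is to reduce everything to the coning correspondence of \cref{thm:coning}, together with the fact that a rigid graph with exactly one excess edge contains a unique circuit. Put $n=|V(G)|$. Since $|E(\hat G')|=|E(G)|+n-1$, and
\[
  (d+1)(n+1)-\tbinom{d+2}{2}=dn-\tbinom{d+1}{2}+n,
\]
the graph $\hat G'$ has exactly the Maxwell count for $(d+1)$-rigidity if and only if $|E(G)|=dn-\binom{d+1}{2}+1$. I will therefore assume this edge count in both directions; it is part of the conclusion in the forward direction. Since $\hat G'\subseteq\hat G$, if $\hat G'$ is $(d+1)$-rigid then so is $\hat G$, and \cref{thm:coning} shows that $G$ is $d$-rigid. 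Conversely, if $G$ is $d$-rigid then $\hat G$ is $(d+1)$-rigid. In either direction we are reduced to the situation where $G$ is $d$-rigid with exactly one excess edge, and hence $\hat G$ is $(d+1)$-rigid with exactly one excess edge, since $r_{d+1}(\hat G)=r_d(G)+n=|E(\hat G)|-1$.

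In this situation $E(G)$ contains a unique $d$-circuit $C_G$, and $E(\hat G)$ contains a unique $(d+1)$-circuit $C$. Moreover, $\hat G-e$ is minimally $(d+1)$-rigid if and only if $e\in C$. Similarly, $G-s_1$ is $d$-independent if and only if $C_G$ meets $s_1$, because $C_G$ is the only dependency in $G$. Thus the theorem reduces to the following claim: $C$ is exactly the cone of $C_G$, that is,
\[
  C=C_G\cup\{vu:u\in V(C_G)\},
\]
since then $vs_1\in C$ if and only if $s_1\in V(C_G)$.

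I will prove the claim as follows. Let $U=V(C_G)$ and let $K$ be the cone of $C_G$ with apex $v$. By \cref{thm:coning},
\[
  r_{d+1}(K)=|C_G|-1+|U|=|E(K)|-1,
\]
so $K$ is dependent and contains a circuit, which must be $C$ by uniqueness. For each $e\in C_G$, \cref{thm:coning} applied to $C_G-e$ gives $r_{d+1}(K-e)=|E(K-e)|$. Hence $K-e$ is independent, so $e\in C$. It remains to show that every cone edge $vu$ with $u\in U$ lies in $C$. This is the main obstacle, because the coning theorem only controls cones of full vertex stars.

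For this step I would use the stress form of the coning correspondence. Place $v$ at the origin and project $G$ radially onto the hyperplane $x_{d+1}=1$. Under this projection, self-stresses of $\hat G$, with weights $\omega$ on $E(G)$ and $\lambda_w$ on the cone edges $vw$, correspond bijectively to self-stresses $\omega'$ of the projected $d$-framework, after rescaling edge weights by the radial factors. The last coordinate of the equilibrium equation at $w$ then gives $\lambda_w$ as an explicit multiple of $\sum_x\omega'_{wx}$. The unique stress of $C_G$ is nowhere zero on $C_G$. So it suffices to show that its weighted vertex sums $\sum_x\omega'_{ux}$, with the rescaling factors, are nonzero at each $u\in U$. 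I would get this from genericity: the factor for $u$ depends on the radial position of $p(u)$, which can be varied freely without changing the projected $d$-framework. An identically vanishing $\lambda_u$ would therefore force an equilibrium at $u$ inside $C_G$ with all neighbours in a generic hyperplane through $p(u)$, contradicting that $C_G$ is a genuine $d$-circuit with $\omega'\neq0$ at $u$.

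If this route gets messy, my fallback is a counting argument. If $vu\notin C$, then $C\subseteq K-vu$, which is the cone of $C_G$ with one cone edge removed. I would then compare ranks using the extension of \cref{thm:coning} to partial cones. Combining the claim with the reductions above gives both directions of the equivalence.
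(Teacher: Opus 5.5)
Your reduction is correct. That covers the edge count, the passage between $G$ and $\hat G$ via \cref{thm:coning}, uniqueness of $C_G$ and $C$, the equivalences ``$\hat G-e$ is minimally $(d+1)$-rigid iff $e\in C$'' and ``$G-s_1$ is $d$-independent iff $C_G$ meets $s_1$'', and the rank computation giving $C_G\subseteq C\subseteq K$. In fact $C\subseteq K$ alone already proves the ``only if'' direction. (The paper itself imports this theorem from Hewetson, Jackson, Nixon and Smith without proof, so there is no in-paper argument to compare with.)

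\textbf{The gap.} The ``if'' direction needs $vu\in C$ for every $u\in V(C_G)$, that is, that the cone of a $d$-circuit is a $(d+1)$-circuit. The proposal does not establish this.
\begin{enumerate}[(i)]
\item The formula you state is wrong. The last coordinate of equilibrium at $w$ does not make $\lambda_w$ a multiple of $\sum_x\omega'_{wx}$. It gives $t_w\lambda_w=\sum_x\omega'_{wx}\bigl(1/t_w-1/t_x\bigr)$, which involves the radial factors of the neighbours; for instance, it vanishes identically when all radial factors are equal.
\item Consequently, varying only the radial position of $u$ changes $\lambda_u$ only through the term $t_u^{-2}\sum_x\omega'_{ux}$. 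You have given no reason for $\sum_x\omega'_{ux}$ to be nonzero.
\item The ``equilibrium with all neighbours in a generic hyperplane through $p(u)$'' is not a contradiction you have derived from anything.
\item The fallback is circular. There is no general rank formula for partial cones, and computing such ranks is exactly what this theorem does.
\end{enumerate}

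\textbf{The repair.} Do the computation using all radial parameters. Put $v$ at the origin and $q(w)=t_w(p(w),1)$, where $p$ is generic in $\mathbb{R}^d$ and the $t_w$ are algebraically independent over $\mathbb{Q}(p)$. A generic translate of $q$ is generic, and stresses are translation invariant. So $C$ is the support of the unique stress of $(\hat G,q)$.

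Let $\omega'$ be the stress of $(G,p)$, normalised to have entries in $\mathbb{Q}(p)$; it is supported exactly on $C_G$. Then the stress of $(\hat G,q)$ is $\omega_{wx}=\omega'_{wx}/(t_wt_x)$ on $E(G)$, together with
\[
  t_w\lambda_w=\sum_{x}\omega'_{wx}\Bigl(\frac{1}{t_w}-\frac{1}{t_x}\Bigr)
\]
on each cone edge $vw$. To check this:
\begin{enumerate}[(a)]
\item the display is the last coordinate of equilibrium at $w$;
\item the first $d$ coordinates reduce to $t_w^{-1}\sum_x\omega'_{wx}(p(w)-p(x))=0$;
\item equilibrium at $v$ is automatic.
\end{enumerate}

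For $u\in V(C_G)$, the right-hand side is a linear form in the variables $1/t_x$. Its coefficient on $1/t_x$ is $-\omega'_{ux}\neq 0$ for each neighbour $x$ of $u$ in $C_G$, so it is nonzero by algebraic independence. For $u\notin V(C_G)$ it is zero. Hence $C$ is exactly the cone of $C_G$, and with this step your argument is complete.
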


Dropping the minimality requirement in \cref{thm:hjns1} gives the following characterisation.

\begin{corollary}\label{thm:almostcone}
  Let $G$ be a graph on at least $d$ vertices,
  let $\hat G$ be obtained from $G$ by coning with new vertex $v$
  and let $\hat G'$ be obtained from $\hat G$
  by deleting $vs_1$ for some $s_1\in V(G)$.
  Then $\hat G'$ is $(d+1)$-rigid if and only if $G$ is
  $d$-rigid and some $d$-circuit in $G$ meets $s_1$. 
\end{corollary}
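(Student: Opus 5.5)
We derive \cref{thm:almostcone} from \cref{thm:hjns1} by passing to spanning subgraphs in both directions. Throughout, since $\hat G'$ has at least $d+1$ vertices, being $(d+1)$-rigid means containing a spanning minimally $(d+1)$-rigid subgraph. The key observation is that any spanning subgraph of $\hat G'$ is itself of the form $\hat H'$ for a spanning subgraph $H$ of $G$, provided it keeps all cone edges $vu$, $u\ne s_1$.

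\emph{Forward direction.} Suppose $\hat G'$ is $(d+1)$-rigid and choose a minimally $(d+1)$-rigid spanning subgraph $K$ of $\hat G'$. We first argue that $K$ contains every edge $vu$ with $u\neq s_1$. The vertex $v$ has degree $|V(G)|-1$ in $\hat G'$. If $K$ omitted some cone edge, we would need to replace it. The cleanest route is to choose $K$ greedily, starting from the cone edges. The set of cone edges at $v$ is $(d+1)$-independent: it is a star, and stars are independent in every dimension. A basis of $\mathcal{R}_{d+1}(\hat G')$ extending this star gives a minimally $(d+1)$-rigid $K$ containing all of them.

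Then $K=\hat H'$, where $H=K-v$ is a spanning subgraph of $G$. \Cref{thm:hjns1} gives three facts about $H$:
\begin{itemize}
  \item $H$ is $d$-rigid;
  \item $H$ has exactly one excess edge;
  \item $H-s_1$ is $d$-independent.
\end{itemize}
Since $H$ is $d$-rigid with one excess edge, it contains a unique $d$-circuit $C$. If $C$ did not meet $s_1$, then $C\subseteq E(H-s_1)$, contradicting the independence of $H-s_1$. So $C$ meets $s_1$. Since $C$ is also a $d$-circuit in $G$, and $G\supseteq H$ is $d$-rigid, the forward direction follows.

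\emph{Converse.} Suppose $G$ is $d$-rigid and $C$ is a $d$-circuit of $G$ meeting $s_1$. The idea is to build a spanning subgraph $H$ of $G$ that satisfies the hypotheses of \cref{thm:hjns1}. Then $\hat H'$ is minimally $(d+1)$-rigid, and since $\hat H'$ is a spanning subgraph of $\hat G'$, this makes $\hat G'$ rigid.

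\begin{enumerate}
  \item Choose an edge $e\in C$ incident to $s_1$. The set $C-e$ is independent.
  \item Extend $C-e$ to a basis $B$ of $\mathcal{R}_d(G)$. This is possible because $G$ is $d$-rigid.
  \item Since $e\in\operatorname{cl}(C-e)\subseteq\operatorname{cl}(B)$, the set $B+e$ has rank $d|V|-\binom{d+1}{2}$ and exactly one more edge. So $H=(V(G),B+e)$ is $d$-rigid with exactly one excess edge.
  \item The unique circuit of $B+e$ is $C$, since $C\subseteq B+e$. This circuit contains $e$, which is incident to $s_1$. So $H-s_1$ loses $e$ and contains no circuit, i.e.\ it is $d$-independent.
\end{enumerate}

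The only care needed concerns degenerate small cases. If $|V(G)|\le d$, a $d$-circuit cannot exist in $G$ while $G$ is complete and small; the hypothesis "on at least $d$ vertices" still allows $|V(G)|=d$, and there both sides are false, so the equivalence holds. The main obstacle is the forward direction's step of ensuring that the minimally rigid subgraph retains all cone edges. The greedy basis extension handles this, because the cone edges at $v$ form an independent star.
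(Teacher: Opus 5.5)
Your proof is correct and follows essentially the same route as the paper: both directions reduce to \cref{thm:hjns1}. In one direction you pass to a minimally $(d+1)$-rigid spanning subgraph that keeps all cone edges, which works because the star at $v$ is independent. In the other you pass to a $d$-rigid spanning subgraph of $G$ whose unique circuit is $C$. Your basis-extension formulation (extending the star, and extending $C-e$ to a basis of $\mathcal{R}_d(G)$ and then adding $e$) is just the constructive counterpart of the paper's repeated deletion of edges from the other circuits.
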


\begin{proof}
Suppose $G$ is $d$-rigid and some $d$-circuit $C$ in $G$ meets $s_1$.
If there is a $d$-circuit $C'$ distinct from $C$ then we may delete any single edge from $C'\sm C$, and repeat this process until no other $d$-circuit exists.
Since we only deleted edges from $d$-circuits, the resulting graph $G^-$ is still $d$-rigid.
Since it contains a unique $d$-circuit,
$|E(G^-)| = d|V(G^-)|-\binom{d+1}{2}+1$ and since $C$ meets $s_1$ we have that $G^- -s_1$ is $d$-independent.
Hence \cref{thm:hjns1} implies that $\hat G'$ is $(d+1)$-rigid.

Conversely, suppose $\hat G'$ is $(d+1)$-rigid.
If $\hat G'$ contains a $(d+1)$-circuit $C$ then necessarily $C$ contains edges not incident to $v$.
Delete any such edge, and repeat until we obtain $\hat H'$ from $\hat G'$ that is minimally $(d+1)$-rigid.
Let $H$ be obtained from $\hat H'$ by deleting $v$.
Then \cref{thm:hjns1} implies that $H$ is $d$-rigid, $|E(H)| = d|V(H)|-\binom{d+1}{2}+1$ and
  $H-s_1$ is $d$-independent.
Since $G$ is obtained from $H$ by adding edges, clearly $G$ is $d$-rigid and contains a $d$-circuit that meets $s_1$.
\end{proof}

\begin{theorem}[\cite{HJNS25+}]\label{thm:hjns2}
  Let $G$ be a graph on at least $d$ vertices, let $\hat G$ be obtained from $G$
  by coning with new vertex $v$
  and let $\hat G''$ be obtained from $\hat G$
  by deleting $vs_1,vs_2$ for some $s_1,s_2\in V(G)$.
  Suppose further that $N_G(s_1)-s_2\neq N_G(s_2)-s_1$.
  Then $\hat G''$ is minimally $(d+1)$-rigid if and only if $G$ is $d$-rigid,
  $|E(G)| = d|V(G)|-\binom{d+1}{2}+2$,
  $G-\{s_1,s_2\}$ is $d$-independent,
  and $s_{i}$ belongs to a $d$-circuit in $G$ for $i=1,2$.
\end{theorem}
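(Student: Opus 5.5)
The plan is to work throughout with the coning correspondence of \cref{thm:coning}, refined to keep track of which cone edges are missing, and to reduce minimal rigidity of $\hat G''$ to a rank statement about self-stresses of $G$. Put $n=|V(G)|$. First comes the edge count. Since $\hat G''$ has $n+1$ vertices, minimal $(d+1)$-rigidity forces
\[
|E(G)|+n-2=(d+1)(n+1)-\tbinom{d+2}{2}=dn-\tbinom{d+1}{2}+n,
\]
that is, $|E(G)|=dn-\binom{d+1}{2}+2$. Conversely, under this count, minimal rigidity of $\hat G''$ is equivalent to $r_{d+1}(\hat G'')=r_{d+1}(\hat G)$ together with $\hat G$ being $(d+1)$-rigid. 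By \cref{thm:coning}, the latter holds exactly when $G$ is $d$-rigid.

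\emph{Necessity.} The three remaining conditions follow quickly.
\begin{itemize}
\item $\hat G''$ contains the cone of $G-\{s_1,s_2\}$. Independence of $\hat G''$ and \cref{thm:coning} therefore give $r_d(G-\{s_1,s_2\})=|E(G-\{s_1,s_2\})|$.
\item $\hat G''+vs_2=\hat G'$ is $(d+1)$-rigid, so \cref{thm:almostcone} yields a $d$-circuit of $G$ meeting $s_1$. The same argument with the roles exchanged handles $s_2$.
\end{itemize}
This direction does not use the non-twin hypothesis.

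\emph{Sufficiency: reduction to stresses.} Place $v$ at the origin (equivalently, at infinity via projective invariance) and write $p(u)=(q(u),1)$ with $q$ generic in $\mathbb{R}^d$. A standard row/column reduction of $R(\hat G,p)$ eliminates the cone edges. For $\hat G''$, the two missing cone edges leave two surviving scale parameters $\lambda_1,\lambda_2$, one for each of $s_1,s_2$. This gives
\[
r_{d+1}(\hat G'')=(n-2)+\operatorname{rank}\bigl[\,R_d(G,q)\mid c_1\mid c_2\,\bigr],
\]
where $c_i$ is the column recording the infinitesimal dilation of $s_i$: in the row of an edge $s_iu$ it has entry $q(s_i)\cdot(q(s_i)-q(u))$, up to normalisation. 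So we need the augmented matrix to have rank $r_d(G)+2$. Dually, let $\Omega$ be the space of self-stresses of $(G,q)$; by the edge count and $d$-rigidity it has dimension $2$. Consider the linear map
\[
\Phi:\Omega\to\mathbb{R}^2,\qquad \Phi(\omega)_i=\sum_{u\in N_G(s_i)}\omega_{s_iu}\,q(s_i)\cdot(q(s_i)-q(u)).
\]
The required rank condition is exactly that $\Phi$ is an isomorphism, i.e.\ $\det\Phi\neq0$ for generic $q$.

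\emph{Sufficiency: the determinant.} Independence of $G-\{s_1,s_2\}$ means every nonzero stress is supported on edges at $s_1$ or $s_2$. The circuit hypotheses give, for each $i$, a stress nonvanishing at $s_i$. Moreover $\Phi(\omega)_i$ is a nonzero polynomial in $q$ whenever $\omega$ is nonzero at $s_i$; to see this, translate $q$ and use equilibrium at $s_i$, which kills the $q(s_i)$-linear part and leaves a genuinely quadratic expression. I expect the main obstacle to be ruling out a generic linear dependence between the two rows of $\Phi$. I would first argue that if $\det\Phi\equiv0$, then some stress $\omega$ has $\Phi(\omega)=0$ identically in $q$. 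Then I would exploit independence of $q(s_1)$ and $q(s_2)$: differentiating in $q(s_1)$, and using equilibrium at every other vertex, should force $\omega_{s_1u}$ and $\omega_{s_2u}$ to be proportional with matching supports. That proportionality can only happen when $N_G(s_1)-s_2=N_G(s_2)-s_1$, which is exactly what the non-twin hypothesis excludes. If this direct polynomial argument proves messy, the fallback is to specialise $q$ to a convenient non-generic position where the determinant is visibly nonzero, e.g.\ placing a vertex of $N_G(s_1)\triangle N_G(s_2)$ far away. Non-vanishing at one point then gives non-vanishing generically.
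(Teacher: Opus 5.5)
The paper does not prove this statement; it quotes it from \cite{HJNS25+}, so your argument has to stand on its own. Your necessity direction is sound: the edge count, $d$-rigidity of $G$ via \cref{thm:coning}, independence of $G-\{s_1,s_2\}$ from the sub-cone, and the circuits at $s_1,s_2$ via \cref{thm:almostcone} applied to $\hat G''+vs_2$ and $\hat G''+vs_1$.

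The sufficiency direction, however, fails at its first step. Your column $c_i$ has entry $q(s_i)\cdot(q(s_i)-q(u))$ in the row of $s_iu$. This is exactly $R_d(G,q)$ applied to the velocity field equal to $q(s_i)$ at $s_i$ and $0$ elsewhere, so $c_i$ lies in the column space of $R_d(G,q)$. Equivalently, equilibrium at $s_i$ gives $\Phi(\omega)_i=q(s_i)\cdot\sum_u\omega_{s_iu}(q(s_i)-q(u))=0$ for every self-stress $\omega$. Equilibrium does not leave a ``genuinely quadratic'' remainder; it annihilates the whole expression. Thus $\Phi\equiv 0$, and your criterion would say that $\hat G''$ is never rigid.

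The error comes from placing $s_i$ at $(q(s_i),1)$. Sliding a vertex $u$ along its ray from $v$ is justified by the bar $vu$, and $s_i$ has no such bar. Indeed, with $s_i$ on the hyperplane $x_{d+1}=1$, moving $s_i$ alone in the direction $e_{d+1}$ is a nontrivial flex of $\hat G''$. A correct reduction takes $p(s_i)=\lambda_i(q(s_i),1)$ with $\lambda_i\neq 0,1$ and writes $\dot p(s_i)=(\lambda_i\dot q(s_i),\,\mu_i-\lambda_i q(s_i)\cdot\dot q(s_i))$. The constraint of an edge $s_iu$ with $u\neq s_{3-i}$ then becomes $\lambda_i(q(s_i)-q(u))\cdot(\dot q(s_i)-\dot q(u))+(\lambda_i-1)\mu_i=0$. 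So the extra column is constant on the edges at $s_i$, with a different entry on $s_1s_2$ if that edge exists. When $s_1s_2\notin E(G)$, $\Phi(\omega)_i$ is, up to a nonzero factor, the total stress $\sum_u\omega_{s_iu}$ at $s_i$.

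Even after this repair, the proposal does not contain the substance of the theorem. You must show that the two coordinate functionals of the corrected $\Phi$ are linearly independent on the two-dimensional stress space $\Omega$. This is exactly where the independence of $G-\{s_1,s_2\}$ and the non-twin hypothesis $N_G(s_1)-s_2\neq N_G(s_2)-s_1$ must be used. For this step you offer only expectations and an unspecified fallback specialisation of $q$: differentiating in $q(s_1)$ ``should force'' proportional stresses at $s_1$ and $s_2$, and proportionality ``can only happen'' for twins. Neither is an argument, and the first was built around the incorrect $\Phi$. As it stands, the sufficiency direction, which is the real content of the statement, is unproved.
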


If minimality is not required, we obtain the following sufficient condition.
Note that no necessary and sufficient condition purely in terms of the number of edges and the intersections of circuits with $\{s_1,s_2\}$ is possible, as Figure \ref{fig:no-nec-suff-cond} shows: there exist graphs (in fact the same graph with different choices of $s_1,s_2$) where the edge count and the number of circuits meeting each subset of $\{s_1,s_2\}$ are the same, but coning and removing the edges to $s_1$ and $s_2$ yields different rigidity properties. This also gives an example to show that the additional condition in (ii) that both circuits do not meet both vertices cannot be removed.

\begin{figure}
    \centering
\begin{tikzpicture}[scale=1.5]
    \filldraw (0,0) circle (0.05);
    \filldraw (1,0) circle (0.05);
    \filldraw (0,1) circle (0.05);
    \filldraw (1,1) circle (0.05);
    \filldraw (2,0) circle (0.05);
    \filldraw (2,1) circle (0.05);
    \filldraw (2.866,.5) circle (0.05);
    \filldraw (2.866,1.5) circle (0.05);
    \filldraw (3.866,.5) circle (0.05);
    \filldraw (3.866,1.5) circle (0.05);
    \node [anchor=east] at (0,0) {$s_1$};
    \node [anchor=east] at (0,1) {$s_2$};
    \node [anchor=south] at (1.5,2.25) {$v$};
    \draw (2.866,1.5)--(2,1)--(2,0)--(0,0)--(0,1)--(2,1)--(2.866,.5)--(3.866,.5)--(3.866,1.5)--cycle;
    \draw (2,0)--(1,1)--(1,0);
    \draw (2.866,.5)--(2.866,1.5);
    \draw [dashed] (1.5,2.25)--(1,0);
    \draw [dashed] (1.5,2.25)--(1,1);
    \draw [dashed] (1.5,2.25)--(2,0);
    \draw [dashed] (1.5,2.25)--(2,1);
    \draw [dashed] (1.5,2.25)--(2.866,.5);
    \draw [dashed] (1.5,2.25)--(2.866,1.5);
    \draw [dashed] (1.5,2.25)--(3.866,.5);
    \draw [dashed] (1.5,2.25)--(3.866,1.5);
    \filldraw[fill=white] (1.5,2.25) circle (0.05);

    \filldraw (5,0) circle (0.05);
    \filldraw (6,0) circle (0.05);
    \filldraw (5,1) circle (0.05);
    \filldraw (6,1) circle (0.05);
    \filldraw (7,0) circle (0.05);
    \filldraw (7,1) circle (0.05);
    \filldraw (7.866,.5) circle (0.05);
    \filldraw (7.866,1.5) circle (0.05);
    \filldraw (8.866,.5) circle (0.05);
    \filldraw (8.866,1.5) circle (0.05);
    \node [anchor=south east] at (7.866,1.5) {$s'_1$};
    \node [anchor=north east] at (7.866,.5) {$s'_2$};
    \node [anchor=south] at (6.5,2.25) {$v'$};
    \draw (7.866,1.5)--(7,1)--(7,0)--(5,0)--(5,1)--(7,1)--(7.866,.5)--(8.866,.5)--(8.866,1.5)--cycle;
    \draw (7,0)--(6,1)--(6,0);
    \draw (7.866,.5)--(7.866,1.5);
    \draw [dashed] (6.5,2.25)--(6,0);
    \draw [dashed] (6.5,2.25)--(6,1);
    \draw [dashed] (6.5,2.25)--(7,0);
    \draw [dashed] (6.5,2.25)--(7,1);
    \draw [dashed] (6.5,2.25)--(5,1);
    \draw [dashed] (6.5,2.25)--(5,0);
    \draw [dashed] (6.5,2.25)--(8.866,.5);
    \draw [dashed] (6.5,2.25)--(8.866,1.5);
        \filldraw[fill=white] (6.5,2.25) circle (0.05);
\end{tikzpicture}
\caption{The graph shown (solid lines) is $1$-rigid, with ten vertices, fourteen edges and nine 1-circuits. Three 1-circuits meet both $s_1$ and $s_2$, and six meet neither. The graph obtained by coning with new vertex $v$ and removing $vs_1,vs_2$ (additional dashed lines, left) is not $2$-rigid. Similarly, three 1-circuits meet both $s_1'$ and $s'_2$, 
and six meet neither. However, the graph obtained by coning with new vertex $v'$ and removing $v's_1',v's_2'$ (additional dashed lines, right) is $2$-rigid.}
\label{fig:no-nec-suff-cond}
\end{figure}

We first recall the strong circuit elimination property (see e.g.\ \cite{Oxley}*{Proposition 1.4.12}): if $C$ and $D$ are distinct circuits of a matroid, with elements $x\in C\cap D$ and $y\in C\sm D$, then there is a circuit $C'$ with $y\in C'\subseteq (C\cup D)-\{x\}$.

\begin{theorem}\label{thm:almostcone2}
 Let $G$ be a graph on at least $d$ vertices, let $\hat G$ be obtained from $G$
  by coning with new vertex $v$, let $\hat G''$ be obtained from $\hat G$
  by deleting $vs_1,vs_2$ for some $s_1,s_2\in V(G)$ and suppose that $N_G(s_1)-s_2\neq N_G(s_2)-s_1$.
  Suppose furthermore that $G$ is $d$-rigid,
  $|E(G)| \geq d|V(G)|-\binom{d+1}{2}+2$
  and one of the following holds: either
  \begin{enumerate}[(i)]\item all $d$-circuits of $G$ meet both $s_1$ and $s_2$; or
  \item $G$ has distinct $d$-circuits $C_1,C_2$
  such that $C_i$ meets $s_i$ for $i=1,2$, but $s_i$ does not meet $C_{3-i}$ for some $i\in \{1,2\}$.
  \end{enumerate}
  Then $\hat G''$ is $(d+1)$-rigid.
\end{theorem}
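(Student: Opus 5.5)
The plan is to reduce to \cref{thm:hjns2}, mirroring the proof of \cref{thm:almostcone}. We delete edges of $G$ one at a time, each time choosing an edge that lies in some $d$-circuit, so that $d$-rigidity is preserved. We stop at a spanning subgraph $G^-$ with exactly two excess edges, i.e. $|E(G^-)|=d|V(G)|-\binom{d+1}{2}+2$. We need $G^-$ to satisfy three conditions: $G^--\{s_1,s_2\}$ is $d$-independent, each $s_i$ meets a $d$-circuit of $G^-$, and the non-twin condition $N_{G^-}(s_1)-s_2\neq N_{G^-}(s_2)-s_1$ still holds. Then \cref{thm:hjns2} shows that the cone of $G^-$ minus $vs_1,vs_2$ is minimally $(d+1)$-rigid. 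Since this graph is a spanning subgraph of $\hat G''$, the graph $\hat G''$ is $(d+1)$-rigid.

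The non-twin condition requires some care, because deleting edges at $s_1$ or $s_2$ could make the two neighbourhoods equal. To avoid this, we fix a witness vertex $w$ adjacent (say) to $s_1$ but not $s_2$, with $w\ne s_2$, and we never delete the edge $s_1w$ while removing edges at $s_1,s_2$. If $s_1w$ is a coloop of $\mathcal{R}_d(G)$, it is never deleted anyway. Otherwise we need to check that the circuit-based deletions can be chosen to avoid it. Since we only ever need to destroy circuits in a prescribed way, we will aim to choose deleted edges away from $s_1w$ when possible. Alternatively, the neighbourhoods can only become equal if $w$ loses its edge to $s_1$, so it suffices to keep that edge.

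In case (i), every $d$-circuit meets both $s_1$ and $s_2$. We reduce until exactly two excess edges remain. Here the circuit space has dimension 2, and every circuit of $G^-$ is a circuit of $G$, so every circuit of $G^-$ meets both $s_1$ and $s_2$. Hence $G^--\{s_1,s_2\}$ contains no circuit, so it is independent, and each $s_i$ meets a circuit. For the witness edge: if $s_1w$ lies in circuits, then in each deletion step there is a choice of a circuit edge outside $s_1w$. We can always pick an edge lying in a circuit, and the fundamental circuit structure gives at least $|C|\ge d+2>1$ choices of edge in a circuit. So we simply never choose $s_1w$, and we keep deleting other circuit edges. The remaining task is to check that the final graph has rank unchanged. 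This holds because each deleted edge lies in a circuit of the current graph.

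In case (ii), we first use strong circuit elimination to control $C_1,C_2$ so that the final two-dimensional circuit space is spanned by $C_1$ and $C_2$. Concretely, we delete edges outside $C_1\cup C_2$ that lie in circuits until the only circuits are contained in $C_1\cup C_2$. The excess is then the corank of $C_1\cup C_2$, which is at least 2. If it is more than 2, we delete further edges of $(C_1\cup C_2)$ while keeping $C_1$ and $C_2$ intact. Such edges exist in other circuits by elimination, and we use strong elimination to keep some circuit through each $s_i$.

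Independence of $G^--\{s_1,s_2\}$ is the main obstacle. With two excess edges, the circuits of $G^-$ are $C_1$, $C_2$, and possibly circuits $C'\subseteq C_1\cup C_2$ obtained by elimination on a common edge. Suppose $C_2$ avoids $s_1$. A circuit avoiding both $s_1$ and $s_2$ must avoid all $s_1$-edges of $C_1$. Applying strong elimination with $y$ an $s_1$-edge of $C_1$, we show that every circuit in the span contains such an edge unless it equals $C_2$, and $C_2$ meets $s_2$. I would check this linear-algebra argument carefully: in a corank-2 set, the circuits correspond to minimal supports in a 2-dimensional space, and a support avoiding both the $s_1$-edges and the $s_2$-edges forces a contradiction with $C_1\ne C_2$ and the meeting hypotheses. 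The role of the extra hypothesis in (ii) is precisely to rule out the Figure~\ref{fig:no-nec-suff-cond} configuration.
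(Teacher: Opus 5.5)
Your case (i) is what the paper does: delete circuit edges other than the witness edge $e$ until exactly two excess edges remain. Your closing observation in case (ii) is also sound, \emph{provided} you reach a $G^-$ whose circuits all lie in $X=C_1\cup C_2$ and $X$ has corank exactly $2$. In that situation the circuits of $X$ are the complements of the classes of a partition of $X$, so the only circuit avoiding $s_1$ is $C_2$.

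The gap is that your procedure does not reach that situation, for two reasons.
\begin{enumerate}[(a)]
\item The corank of $C_1\cup C_2$ is at least $2$ but can be larger. Then ``delete further edges of $C_1\cup C_2$ while keeping $C_1$ and $C_2$ intact'' is impossible, since every such edge lies in $C_1$ or $C_2$.
\item Forcing all circuits into $C_1\cup C_2$ can require deleting the witness edge.
\end{enumerate}
Both already happen for $d=1$. Take $V=\{s_1,s_2,a,b,c\}$, let $C_1$ be the triangle $s_1ab$ and $C_2$ the $4$-cycle $s_2acb$, and put $G=C_1\cup C_2+s_1c$. All hypotheses hold, with (ii) witnessed by $C_1,C_2$. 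However, $C_1\cup C_2$ has nullity $3$. Moreover, the only edge distinguishing $N(s_1)-s_2$ from $N(s_2)-s_1$ is $s_1c$, which lies outside $C_1\cup C_2$ but in the cycle $s_1bc$. Your reduction therefore deletes it and makes $s_1,s_2$ twins. Replacing $C_1$ or $C_2$ by other circuits is unavoidable here. ``Keep some circuit through each $s_i$'' does not control which pattern survives (compare Figure~\ref{fig:no-nec-suff-cond}).

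The missing idea is an extremal choice. Among all pairs satisfying (ii), take $C_1,C_2$ minimising $|C_1\cup C_2\cup\{e\}|$, and delete circuit edges only from outside $C_1\cup C_2\cup\{e\}$. Minimality then gives both properties you need; assume $C_2$ avoids $s_1$.
\begin{enumerate}[(a)]
\item \emph{Every circuit meets $s_1$ or $s_2$.} Suppose a circuit $C^*$ avoided both. Pick $e^*\in C^*\setminus C_1$; it lies in $C_2$, since $e\notin C^*$. Pick an $s_2$-edge $f\in C_2\setminus C^*$. Strong elimination gives a circuit $C_2'\ni f$ inside $(C_2\cup C^*)-e^*$ that avoids $s_1$. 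Then $(C_1,C_2')$ is a valid pair with strictly smaller union, a contradiction.
\item \emph{Exactly two excess edges.} Take $e_1\neq e$ an $s_1$-edge of $C_1$, and $e_2\in C_2\setminus C_1$, avoiding $e$ if possible. By minimality, every circuit meeting $s_1$ contains $e_1$. Every circuit meeting $s_2$ but not $s_1$ contains $e_2$: if $e_2=e$, such a circuit missing $e$ would lie inside $C_1$. Hence $G^--\{e_1,e_2\}$ is independent and still rigid, so $G^-$ has exactly two excess edges.
\end{enumerate}
Now \cref{thm:hjns2} applies. In the example above this choice selects, for instance, $C_1=s_1ab$ and $C_2'=s_2ab$, and keeps $s_1c$. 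Without this device, or an equivalent one, your case (ii) does not go through.
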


\begin{proof}
First suppose $d=0$ and note that a $0$-circuit is just an edge. Since $|E(G)|\geq 2$, (i) cannot apply, and (ii) requires distinct edges $e_1,e_2$ such that (without loss of generality) $e_1$ connects $s_1$ to $V(G)\sm\{s_1\}$ and $e_2$ connects $s_2$ to $V(G)\sm\{s_1,s_2\}$. In this case it is clear that $\hat G''$ is connected, i.e.\ $1$-rigid. Henceforth we assume $d\geq 1$.

Since $N_G(s_1)-s_2\neq N_G(s_2)-s_1$, there exists some edge $e$ meeting $s_1$ or $s_2$ such that $N_{G^-}(s_1)-s_2\neq N_{G^-}(s_2)-s_1$ for every spanning subgraph $G^-$ that contains $e$.

First, if (i) applies, remove edges other than $e$ from $d$-circuits to obtain a spanning subgraph $G^-$ containing $e$ with exactly $d|V(G)|-\binom{d+1}2+2$ edges.
This implies $G^-$ has at least two $d$-circuits, both of which meet $s_1$ and $s_2$, and $G^--\{s_1,s_2\}$ has no $d$-circuits.

Secondly, suppose (ii) applies, and choose $d$-circuits $C_1,C_2$ with this property such that $|C_1\cup C_2\cup\{e\}|$ is as small as possible.
If there is a $d$-circuit $C'\not\subset C_1\cup C_2\cup\{e\}$, then we may delete any single edge from $C'\sm (C_1\cup C_2\cup\{e\})$, and repeat this process until no such $d$-circuit exists.
Since we only deleted redundant edges, the resulting graph $G^-$ is $d$-rigid and contains $e$ and the $d$-circuits $C_1,C_2$.
Without loss of generality, we may assume $s_1$ does not meet $C_2$.

Next we claim that every $d$-circuit in $G^-$ meets either $s_1$ or $s_2$.
Indeed, suppose some $d$-circuit $C^*$ meets neither.
Every $d$-circuit of $G^-$ is contained in $C_1\cup C_2\cup\{e\}$, so $C^*\subseteq C_1\cup C_2\cup\{e\}$. Since $C_1$ meets $s_1$ we have $C^*\neq C_1$ so we may choose an edge $e^*\in C^*\sm C_1$. As $e^*$ lies in $C^*$ it is incident to neither $s_1$ nor $s_2$ and hence $e^*\in C_2$. Moreover $C_2\neq C^*$, since $C_2$ meets $s_2$ and $C^*$ does not, and for the same reason $C_2$ contains an edge $f$ incident to $s_2$ with $f\notin C^*$.
Applying strong circuit elimination to $C_2$ and $C^*$,
we obtain a $d$-circuit $C_2'$ with $f\in C_2'\subseteq (C_2\cup C^*)-\{e^*\}$.
Since $f\in C_2'$, the circuit $C_2'$ meets $s_2$. Since neither $C_2$ nor $C^*$ meets $s_1$, neither does $C_2'$ and hence $C_2'\neq C_1$.
However, now $C_1\cup C_2'\cup\{e\}\subseteq (C_1\cup C_2\cup\{e\})-\{e^*\}$. This contradicts the choice of $C_1,C_2$.
Thus $G^- - \{s_1,s_2\}$ is $d$-independent.

Finally, we claim that $G^-$ has exactly $d|V(G)|-\binom{d+1}2+2$ edges.
Indeed, choose edges $e_1\in C_1\sm C_2$ and $e_2\in C_2\sm C_1$, avoiding choosing $e$ if possible.
Since $C_1\sm C_2$ includes all edges of $C_1$ meeting $s_1$, and there are at least two of these, $e_1\neq e$.
Since $C_1,C_2$ are $d$-circuits, $C_2\sm C_1$ is non-empty, and we choose $e_2=e$ only if $C_2\sm C_1=\{e\}$.
Now every $d$-circuit of $G^-$ meeting $s_1$ must include all edges of $C_1\sm (C_2 \cup\{e\})$, since if some $d$-circuit $C_1'$ did not then $C_1'\cup C_2\cup\{e\}$ would be a proper subset of $C_1\cup C_2\cup\{e\}$, contradicting choice of $C_1$; in particular every such $d$-circuit includes $e_1$.
Similarly, if $e_2\neq e$ then every $d$-circuit of $G^-$ meeting $s_2$ but not $s_1$ includes $e_2$.
If $e_2=e$ then the same conclusion follows for a different reason: if some such $d$-circuit did not include $e$ then it would be a proper subset of $C_1$, which is impossible.
Since all remaining $d$-circuits fall into one of these types, $G^--\{e_1,e_2\}$ is $d$-independent and hence minimally $d$-rigid, and the edge count follows.

In either case, we may apply  \cref{thm:hjns2} to deduce that the graph obtained from $G^-$ by coning with new vertex $v$ and removing $vs_1$ and $vs_2$, and hence $\hat G''$, is $(d+1)$-rigid.
\end{proof}

\subsection{Removing cone edges}

For a vertex $u$ of a graph, write $N(u)$ for its open neighbourhood.

\begin{lemma}\label{lem:remove-one}
  Let $G$ be an $n$-vertex $d$-rigid graph with at least one excess edge,
  and let $u\in V(G)$ satisfy $d(u)\geq n-2$.
  Let $\hat G'$ be the graph obtained by coning with a new vertex $v$
  and then removing the edge $uv$.
  Then $\hat G'$ is $(d+1)$-rigid.
\end{lemma}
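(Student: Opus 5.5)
The plan is to reduce the statement to \cref{thm:almostcone}, taking $s_1=u$. That corollary says $\hat G'$ is $(d+1)$-rigid if and only if $G$ is $d$-rigid and some $d$-circuit of $G$ meets $u$. We are given that $G$ is $d$-rigid, so the whole task is to produce a $d$-circuit of $G$ containing an edge at $u$.

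First I would check the side conditions. Since $G$ is $d$-rigid with at least one excess edge, it must have at least $d+2$ vertices. Indeed, for $n=d+1$ the complete graph $K_{d+1}$ has exactly $\binom{d+1}{2}=d(d+1)-\binom{d+1}{2}$ edges, so it has no excess. In particular $n\geq d$, as \cref{thm:almostcone} requires.

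Next I would show that $d_G(u)\geq d+1$, splitting into two cases.
\begin{itemize}
\item If $n\geq d+3$, this is immediate from $d(u)\geq n-2$.
\item If $n=d+2$, then $dn-\binom{d+1}{2}=d(d+3)/2$ and $\binom{d+2}{2}=d(d+3)/2+1$. So having an excess edge forces $G=K_{d+2}$, and then $d(u)=n-1=d+1$.
\end{itemize}

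The key matroidal step, which I regard as the only real content, is that an edge at $u$ lies in a $d$-circuit. Let $E_u$ be the set of edges incident to $u$, and fix a generic $p$. In the rigidity matrix, the rows of $E_u$ differ from rows of edges of $G-u$ only through the $d$ columns indexed by $u$, and rows of $G-u$ vanish on those columns. Hence
\[
  r_d(G)-r_d(G-u)\leq d.
\]
Since $|E_u|\geq d+1>d$, the edges of $E_u$ cannot all be coloops of $\mathcal R_d(G)$. Here I use that the coloops within $E_u$ form an independent set whose rank adds fully over $E\sm E_u$. So some $e\in E_u$ lies in the closure of $E\sm\{e\}$, and therefore lies in some $d$-circuit $C$. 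This circuit $C$ meets $u$.

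Applying \cref{thm:almostcone} then gives that $\hat G'$ is $(d+1)$-rigid. The case $d=0$ fits the same argument: a $0$-circuit is a pair of parallel elements (every edge is a loop in rank $0$), and $d(u)\geq 1$ suffices.

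The step needing the most care is the rank bound $r_d(G)-r_d(G-u)\leq d$ and its translation into the existence of a non-coloop edge at $u$. I would argue this directly from the block structure of the rigidity matrix: deleting the $d$ columns of $u$ kills all rows of $E_u$ and leaves the matrix of $G-u$ together with zero rows.
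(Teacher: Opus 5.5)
\textbf{There is a genuine gap in your key step.} The inequality $r_d(G)-r_d(G-u)\leq d$ is false in general, and the matrix justification does not hold. Deleting the $d$ columns indexed by $u$ does not kill the row of an edge $uw$, because that row also carries $p(w)-p(u)\neq 0$ in the columns of $w$. Adding $u$ can rigidify parts of $G-u$, so the rank can jump by more than $d$. What is true generically is the reverse bound $r_d(G)\geq r_d(G-u)+\min\{d,d_G(u)\}$.

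The inequality fails even under the lemma's hypotheses. Take $d=1$ and let $G$ have vertex set $\{u,a,b,c,x\}$ and edges $ua,ub,uc,xa,xb$. This graph is connected, has $5>4$ edges and satisfies $d(u)=3=n-2$. Yet $r_1(G)=4$ and $r_1(G-u)=2$; the edge $uc$ is a coloop, and only $ua,ub$ lie in a circuit.

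Your argument also proves too much. Apart from deriving $d(u)\geq d+1$, it never uses that $G$ is rigid with an excess edge. So it would show that every vertex of degree at least $d+1$ in any graph is met by a $d$-circuit, which already fails for the centre of $P_3$ when $d=1$.

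\textbf{The missing idea.} Use the circuit supplied by the excess edge and relocate it onto $u$. Since $G$ has an excess edge, it contains a $d$-circuit $C$. Suppose $C$ does not meet $u$, and let $x$ be the non-neighbour of $u$, if it exists.
\begin{itemize}
\item If $x\in V(C)$, then $N(x)\subseteq V\sm\{u,x\}\subseteq N(u)$. Replacing $x$ by $u$ in every edge of $C$ at $x$ gives a subgraph of $G$ isomorphic to $C$.
\item If $x\notin V(C)$, then $u$ is adjacent to every vertex of $C$. You can perform the same replacement with any vertex $y\in V(C)$.
\end{itemize}
Whether an edge set is a $d$-circuit depends only on the graph it spans, so this gives a $d$-circuit meeting $u$. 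Then \cref{thm:almostcone} finishes the proof, as you intended.

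A minor point: in $\mathcal R_0$ every edge is a loop, so a $0$-circuit is a single edge, not a pair of parallel elements. Your $d=0$ conclusion is unaffected.
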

\begin{proof}
  Write $x$ for the unique non-neighbour of $u$, if it exists.
  Let $C$ be any $d$-circuit of $G$.
  If $C$ meets $u$ there is nothing to prove so suppose $C$ does not meet $u$.
  If $C$ meets $x$ then consider the set of edges $C'$
  obtained by replacing all edges meeting $x$ by the corresponding edges meeting $u$.
  Since $N(u)\supseteq N(x)$,
  this is well-defined and isomorphic to $C$, so gives a circuit meeting $u$.
  Otherwise, let $y$ be any vertex of $C$
  and observe that $N(u)\cap V(C)\supset N(y)\cap V(C)$.
  Thus we may replace every edge of $C$ meeting $y$
  by the corresponding edge meeting $u$,
  and obtain a circuit $C'$ meeting $u$.
  In either case, the result now follows from \cref{thm:almostcone}.
\end{proof}

Write $N[u]=N(u)\cup\{u\}$ for the closed neighbourhood of a vertex $u$.
Two vertices $u$ and $w$ are \defn{twins}
if $N(u)=N(w)$ or $N[u]=N[w]$.

\begin{lemma}\label{lem:remove-two}
  Let $G$ be an $n$-vertex $d$-rigid graph with at least two excess edges,
  and let $u,w\in V(G)$ be vertices that are not twins but satisfy
  $d(u)=d(w)=n-2$ and $w=\gamma(u)$ for some $\gamma\in \operatorname{Aut}(G)$.
  Let $\hat G''$ be the graph obtained by coning with a new vertex $v$
  and then removing the edges $uv$ and $vw$.
  Then $\hat G''$ is $(d+1)$-rigid.
\end{lemma}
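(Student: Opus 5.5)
The plan is to apply \cref{thm:almostcone2} with $s_1=u$ and $s_2=w$. The rigidity hypothesis holds by assumption. The edge count $|E(G)|\geq d|V(G)|-\binom{d+1}{2}+2$ is exactly the statement that $G$ has at least two excess edges. It then remains to check the neighbourhood condition and then one of the alternatives (i) or (ii).

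\emph{Neighbourhood condition.} Suppose $N_G(u)-w=N_G(w)-u$.
\begin{itemize}
\item If $uw\notin E(G)$, this gives $N(u)=N(w)$.
\item If $uw\in E(G)$, it gives $N[u]=N[w]$.
\end{itemize}
Either way $u$ and $w$ would be twins, contrary to assumption. So the non-twin hypothesis is exactly what \cref{thm:almostcone2} needs.

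\emph{Substitution step.} This is the tool from the proof of \cref{lem:remove-one}. Suppose $d(u)=n-2$, let $x$ be the non-neighbour of $u$, and let $C$ be a $d$-circuit avoiding $u$.
\begin{itemize}
\item If $C$ meets $x$, replace $x$ by $u$. This is valid because $N(x)\subseteq V\sm\{x,u\}\subseteq N(u)$.
\item Otherwise replace an arbitrary vertex $y$ of $C$ by $u$. This is valid because $N(u)\supseteq V(C)\sm\{y\}$.
\end{itemize}
The result is an isomorphic edge set, hence a $d$-circuit $C'$ meeting $u$. Its vertex set is $V(C)$ with one vertex replaced by $u$. In particular, if $C$ avoids $w$ then so does $C'$. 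The same argument applies to $w$, since $d(w)=n-2$. Because $G$ has an excess edge it has at least one $d$-circuit, so some $d$-circuit meets $u$ and some $d$-circuit meets $w$.

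\emph{Case analysis.} If every $d$-circuit meets both $u$ and $w$, alternative (i) holds. Otherwise some $d$-circuit $C$ misses $u$ or misses $w$.
\begin{itemize}
\item If $C$ misses $u$, then $\gamma(C)$ is a $d$-circuit missing $\gamma(u)=w$. So we may assume $C$ misses $w$.
\item If $C$ also misses $u$, the substitution step turns it into a $d$-circuit meeting $u$ and still missing $w$.
\end{itemize}
Either way we obtain a $d$-circuit $C_1$ meeting $u$ but not $w$. Now take any $d$-circuit $C_2$ meeting $w$, which exists by the substitution step. Then $C_2\neq C_1$, since $C_1$ does not meet $w$. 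So $C_i$ meets $s_i$ for $i=1,2$, and $s_2=w$ does not meet $C_1$. This is alternative (ii), and \cref{thm:almostcone2} gives that $\hat G''$ is $(d+1)$-rigid.

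The main point needing care is that the substitution must preserve avoidance of $w$. This is why we substitute for the non-neighbour $x$, or for an arbitrary vertex of $C$, and never introduce $w$. The automorphism $\gamma$ is used only to move a circuit avoiding $u$ to one avoiding $w$, so the case distinction can be made symmetric.
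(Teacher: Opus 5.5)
Your proposal is correct and takes essentially the same route as the paper. Both reduce to \cref{thm:almostcone2}, use the degree-$(n-2)$ substitution from \cref{lem:remove-one} to turn a circuit avoiding both $u$ and $w$ into one meeting exactly one of them, and use $\gamma$ to transfer between $u$ and $w$. The only differences are cosmetic: you check explicitly that non-twins gives $N_G(u)-w\neq N_G(w)-u$, and you use $\gamma$ to move a circuit missing $u$ to one missing $w$ rather than to move a circuit meeting one vertex to one meeting the other.
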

\begin{proof}
  Since $u,w$ are not twins, by \cref{thm:almostcone2},
  it suffices to prove one of the following:
  either every $d$-circuit of $G$ meets both $u$ and $w$,
  or there are $d$-circuits $C_u,C_w$ such that $C_u$ meets $u$,
  $C_w$ meets $w$,
  and either $C_u$ does not meet $w$ or $C_w$ does not meet $u$.
  Let $x$ be the unique non-neighbour of $w$.

  First, suppose that there is a circuit meeting exactly one of $u,w$.
  Applying $\gamma$ or $\gamma^{-1}$ as appropriate gives another circuit meeting the other vertex.
  These two circuits satisfy the second alternative above.

  Secondly, suppose there is a circuit $C$ not meeting $u$ or $w$.
  We claim we can find a distinct circuit $C'$ meeting $w$ but not $u$,
  reducing to the previous case.
  If $x\in V(C)$ then, since $N(x)\subseteq N(w)$,
  we may replace edges meeting $x$ by corresponding edges meeting $w$
  to obtain a suitable $C'$.
  If $x\not\in V(C)$ then choose any $y\in V(C)$;
  since $N(y)\cap V(C)\subset N(w)$,
  we may replace each edge of $C$ meeting $y$ with the corresponding edge meeting $w$.

  If neither of these cases applies,
  then every circuit meets both $u$ and $w$,
  so the first alternative holds.
\end{proof}

\section{Complements of graphs of maximum degree two}\label{sec:two-factors}

In this section we prove \cref{thm:2-reg} by induction on the number of connected components of the complement.
Since the complement has maximum degree at most two,
the base case in which it is connected reduces to paths and cycles.
We treat these two cases first.

Let $G$ be an $n$-vertex graph.
For $d=n-q$, the identity
$dn-\binom{d+1}{2}=\binom n2-\binom q2$
shows that the Maxwell inequality defining $\maxwell(G)$ is equivalent to
$|E(\overline G)|\leq\binom q2$.
By the definition of $a_j$, the least positive integer $q$ satisfying this inequality is
$a_{|E(\overline G)|}+1$. Consequently,
\begin{equation}\label{eq:maxwell-complement}
  \maxwell(G)=n-1-a_{|E(\overline G)|}.
\end{equation}
We record one consequence of~\eqref{eq:maxwell-complement}.
If $H$ is obtained from an $n$-vertex graph $G$ by deleting $s$ vertices,
then $\maxwell(H)\geq\maxwell(G)-s$.
Indeed, $\overline H$ is an induced subgraph of $\overline G$, so
$|E(\overline H)|\leq |E(\overline G)|$ and hence
$a_{|E(\overline H)|}\leq a_{|E(\overline G)|}$.
Therefore~\eqref{eq:maxwell-complement} gives
\begin{equation}\label{eq:maxwell-deletion}
  \maxwell(H)=n-s-1-a_{|E(\overline H)|}
  \geq n-s-1-a_{|E(\overline G)|}=\maxwell(G)-s.
\end{equation}

\subsection{The induction base}

\subsubsection{Cycles}

We first consider the case in which the complement is a cycle.
For $n\geq 3$, let $d_C(n)=\maxwell(\overline C_n)$.
Since $C_n$ has $n$ edges,
\eqref{eq:triangular-index} and~\eqref{eq:maxwell-complement} give the following.

\begin{observation}\label{obs:cycle-dimension}
  For each $n\geq 3$ we have
  $d_C(n)=n-1-a_n=n-\bigl\lceil 1/2+\sqrt{2n+1/4}\bigr\rceil$.
\end{observation}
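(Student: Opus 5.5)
The first equality should follow directly from \eqref{eq:maxwell-complement}, and the second from identifying $a_n+1$ as the root of a quadratic, rounded up. Since $\overline{\overline C_n}=C_n$ has exactly $n$ edges, \eqref{eq:maxwell-complement} gives $d_C(n)=\maxwell(\overline C_n)=n-1-a_n$ at once. So it remains to show
\[
  a_n+1=\bigl\lceil 1/2+\sqrt{2n+1/4}\bigr\rceil .
\]

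For this I will work from the defining property of $a_n$ rather than from \eqref{eq:triangular-index}. Since $n\geq 3\geq 1$, $a_n$ is the unique integer with $\binom{a_n}{2}<n\leq\binom{a_n+1}{2}$. The function $q\mapsto\binom q2=q(q-1)/2$ is strictly increasing on positive integers. Hence $a_n+1$ is exactly the least positive integer $q$ with $\binom q2\geq n$. This is the same characterisation already used in deriving \eqref{eq:maxwell-complement}.

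Next I rewrite that inequality. For positive $q$, the condition $\binom q2\geq n$ is equivalent to $q^2-q-2n\geq 0$. The quadratic $q^2-q-2n$ has roots $\tfrac12\pm\sqrt{2n+1/4}$. The smaller root is negative, so for positive $q$ the inequality holds if and only if $q\geq \tfrac12+\sqrt{2n+1/4}$. The least integer satisfying this is $\lceil 1/2+\sqrt{2n+1/4}\rceil$. This value is at least $1$, so it is a positive integer, and therefore it equals $a_n+1$.

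Substituting this into $d_C(n)=n-1-a_n$ gives the stated closed form. There is no real obstacle here. The only point needing care is the boundary case where $\binom{a_n+1}{2}=n$ exactly, that is, where $\tfrac12+\sqrt{2n+1/4}$ is itself an integer. There the ceiling returns that integer, which matches the non-strict inequality $n\leq\binom{a_n+1}{2}$, so the formula still holds.
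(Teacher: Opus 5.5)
Your proof is correct and follows essentially the paper's route: the first equality is \eqref{eq:maxwell-complement} applied with $|E(C_n)|=n$, and the second is an elementary computation of $a_n$. The only difference is that you solve $q^2-q-2n\geq 0$ directly from the defining inequality of $a_n$, where the paper cites the floor formula \eqref{eq:triangular-index}; your version spells out the conversion to the ceiling form, which the paper leaves implicit.
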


\begin{lemma}\label{lem:hamilton-step}
  Suppose that $\overline C_n$ is $d$-rigid.
  Then $\overline C_{n+1}$ is $d$-rigid, and it is $(d+1)$-rigid unless $\overline C_n$ is minimally $d$-rigid.
\end{lemma}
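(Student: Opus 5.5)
The plan is to realise $\overline C_{n+1}$ as a cone over a supergraph of $\overline C_n$ with two cone edges removed, and then apply \cref{lem:zero-extension} for the first claim and \cref{lem:remove-two} for the second.

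\textbf{Setup.} Label $C_{n+1}$ as $1,2,\dots,n,v$ in cyclic order. Deleting $v$ from $C_{n+1}$ leaves the path $1,2,\dots,n$, whose complement is
\[
  G=\overline C_n+e,\qquad e=\{1,n\},
\]
where $\overline C_n$ is taken on the cycle $1,\dots,n$. In $\overline C_{n+1}$ the vertex $v$ is adjacent to every vertex of $G$ except $1$ and $n$. Hence
\[
  \overline C_{n+1}=\hat G-\{v1,vn\},
\]
where $\hat G$ is the cone over $G$ with apex $v$.

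\textbf{First claim ($d$-rigidity).} If $d=0$ there is nothing to prove, so assume $d\geq1$.
\begin{enumerate}[(i)]
\item Since $\overline C_n\subseteq G$, the graph $G$ is $d$-rigid.
\item The vertex $v$ has $n-2$ neighbours in $G$. By Maxwell's count, $d\leq\maxwell(\overline C_n)=n-1-a_n\leq n-3$, so $v$ has at least $d$ neighbours.
\item \cref{lem:zero-extension} then shows that $\overline C_{n+1}$ is $d$-rigid.
\end{enumerate}

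\textbf{Second claim ($(d+1)$-rigidity).} Suppose $\overline C_n$ is $d$-rigid but not minimally $d$-rigid. Then it has at least one excess edge. Since $e\notin E(\overline C_n)$, the graph $G$ has at least two excess edges. We check the remaining hypotheses of \cref{lem:remove-two} with $u=1$ and $w=n$.
\begin{enumerate}[(i)]
\item \emph{Degrees.} In $G$, vertex $1$ misses only $2$ and vertex $n$ misses only $n-1$. So $d_G(1)=d_G(n)=n-2$.
\item \emph{Automorphism.} The reflection $i\mapsto n+1-i$ is an automorphism of the path $1,\dots,n$, hence of its complement $G$, and it swaps $1$ and $n$.
\item \emph{Not twins.} We have $N(1)=V(G)\sm\{1,2\}$ and $N(n)=V(G)\sm\{n-1,n\}$. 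Their closed neighbourhoods are $V(G)\sm\{2\}$ and $V(G)\sm\{n-1\}$. When $n\geq4$ we have $2\neq n-1$, so neither the open nor the closed neighbourhoods coincide, and $1,n$ are not twins.
\end{enumerate}
\cref{lem:remove-two} therefore gives that $\hat G-\{v1,vn\}=\overline C_{n+1}$ is $(d+1)$-rigid.

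\textbf{Small cases.} These are the only place needing care, since for $n=3$ the non-twin condition fails. When $n=3$, $\overline C_3$ is edgeless, so it is $d$-rigid only for $d=0$, and then it is minimally $0$-rigid. Thus only the first claim is needed, and it is trivial for $d=0$. The check that $d\leq n-3$ in step (ii) of the first claim also relies on $a_n\geq2$ for $n\geq3$.

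I expect the main obstacle to be confirming that $G$ really satisfies the hypotheses of \cref{lem:remove-two}: the degree condition $n-2$, the automorphism exchanging the two endpoints, and the non-twin condition. The argument rests on the identification $\overline C_{n+1}=\hat G-\{v1,vn\}$ together with the count of two excess edges.
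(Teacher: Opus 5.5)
Your proof is correct, but it checks the key step differently from the paper. The paper uses the same construction: add an edge $xy$ of $C_n$ to $\overline C_n$ (producing $\overline P_n$), then cone and delete the two cone edges at $x$ and $y$. It also uses the same $0$-extension argument for $d$-rigidity.

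For $(d+1)$-rigidity, however, the paper applies \cref{thm:almostcone2} directly and chooses the edge $xy$ according to the circuit structure of $\overline C_n$:
\begin{itemize}
\item If some $d$-circuit $C$ misses a vertex, connectivity of $C_n$ gives an edge $xy$ with $C$ meeting $x$ but not $y$. Together with a circuit of $\overline C_n+xy$ through $xy$, this gives alternative~(ii).
\item If every circuit meets every vertex, any edge works via alternative~(i).
\item The case $d=0$ is settled separately by connectivity.
\end{itemize}

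You instead fix the edge $\{1,n\}$ and let the reflection of $\overline P_n$ do the work through \cref{lem:remove-two}. That lemma's proof moves a circuit onto $w$ by swapping it with $w$'s unique non-neighbour, then onto $u$ by applying $\gamma$.

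What your route buys: it is shorter, avoids any case analysis on the circuit structure, and handles $d=0$ with no extra argument, since \cref{thm:almostcone2} covers that case. It also shows that this lemma is the same mechanism as the cycle-closing step in \cref{lem:add-cycle}.

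What the paper's route buys: it is more hands-on and rests only on the freedom to choose which edge of $C_n$ to subdivide. It does not need the automorphism and degree-$(n-2)$ hypotheses packaged into \cref{lem:remove-two}.
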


\begin{proof}
  Put $G=\overline C_n$.
  If $n=3$, then the hypothesis forces $d=0$ and $G$ is minimally $0$-rigid,
  so the result follows.
  Suppose next that $d=0$.
  We may assume $n\geq 4$, in which case $G$ has an edge and hence is not minimally $0$-rigid.
  The graph $\overline C_{n+1}$ is connected and hence $1$-rigid,
  so the result follows in this case as well.
  We may therefore assume $d\geq 1$, which forces $n\geq 5$.

  First note that
  $|E(G)|=n(n-3)/2\leq n(n-3)-\binom{n-2}{2}$,
  so $d\leq n-3$.
  If $G$ is minimally $d$-rigid,
  choose any edge $xy\in E(C_n)$.
  Otherwise, $G$ has a $d$-circuit.
  If every $d$-circuit of $G$ meets every vertex,
  again choose any edge $xy\in E(C_n)$.
  Otherwise, choose a $d$-circuit $C$ that does not meet every vertex.
  Since $C_n$ is connected,
  it has an edge $xy$ such that $C$ meets $x$ but not $y$.

  We construct $\overline C_{n+1}$ from $\overline C_n$ in two steps:
  first add the edge $xy$, then add a new vertex adjacent to everything except $x$ and $y$.
  Note that the second step adds a vertex of degree $n-2\geq d+1$.
  Thus, by \cref{lem:zero-extension},
  $\overline C_{n+1}$ is always $d$-rigid,
  and if the graph created by adding an edge was $(d+1)$-rigid
  then so is $\overline C_{n+1}$.
  It remains to prove the latter conclusion when $G$ is not minimally $d$-rigid
  and $G+xy$ is not already $(d+1)$-rigid.

  Since $G$ is $d$-rigid,
  the graph $G+xy$ contains a $d$-circuit $D$ containing $xy$.
  Moreover, $G+xy$ has at least two excess edges.
  If every $d$-circuit of $G$ meets every vertex,
  then every $d$-circuit of $G+xy$ meets both $x$ and $y$:
  a circuit not containing $xy$ is a circuit of $G$,
  while a circuit containing $xy$ meets both vertices.
  Otherwise, the circuit $C$ chosen above meets $x$ but not $y$,
  whereas $D$ meets $y$.

  Let $x'$ and $y'$ be the other neighbours of $x$ and $y$ in $C_n$, respectively.
  Since $n\geq 5$, we have $x'\neq y'$,
  and hence
  $N_{G+xy}(x)-y\neq N_{G+xy}(y)-x$.
  Thus the conditions of \cref{thm:almostcone2} are satisfied.
  Coning $G+xy$ with a new vertex $v$ and deleting $vx$ and $vy$
  gives $\overline C_{n+1}$,
  so this graph is $(d+1)$-rigid.
\end{proof}

\begin{theorem}\label{thm:hamilton}
  For each $n\geq3$, the graph $\overline C_n$ is $d_C(n)$-rigid.
\end{theorem}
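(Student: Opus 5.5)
We argue by induction on $n$, using \cref{lem:hamilton-step} as the inductive engine. For the base cases we take small $n$: $d_C(3)=0$, $d_C(4)=0$, $d_C(5)=1$, and $\overline C_5=C_5$ is connected, hence $1$-rigid. Now suppose $\overline C_n$ is $d_C(n)$-rigid. By \cref{obs:cycle-dimension}, $d_C(n+1)-d_C(n)=1-(a_{n+1}-a_n)\in\{0,1\}$, since $a$ increases by at most one per step. If $d_C(n+1)=d_C(n)$, the first conclusion of \cref{lem:hamilton-step} already gives that $\overline C_{n+1}$ is $d_C(n+1)$-rigid. So the only issue is the case $a_{n+1}=a_n$, where we need the stronger conclusion. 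For this it suffices that $\overline C_n$ is not minimally $d_C(n)$-rigid.

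We check non-minimality by an edge count. A minimally $d$-rigid graph on $n\ge d+1$ vertices has exactly $dn-\binom{d+1}{2}$ edges. With $d=n-q$ this equals $\binom n2-\binom q2$, and $\overline C_n$ has $\binom n2-n$ edges. So minimality would force $n=\binom q2$, where $q=a_n+1$, i.e.\ $n=\binom{a_n+1}{2}$. In that case $n$ is a triangular number $\binom{a_n+1}{2}$, and then $n+1>\binom{a_n+1}{2}$ gives $a_{n+1}=a_n+1$. Hence $d_C(n+1)=d_C(n)$ and we only need the weak conclusion. Conversely, if $a_{n+1}=a_n$ then $n<\binom{a_n+1}{2}$, so $|E(\overline C_n)|>dn-\binom{d+1}{2}$. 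The graph therefore has an excess edge and is not minimally rigid, and \cref{lem:hamilton-step} gives $(d_C(n)+1)$-rigidity of $\overline C_{n+1}$.

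One must also handle the degenerate small cases: the convention that a complete graph on at most $d+1$ vertices is rigid, and $d=0$. The case $n=3$ is covered inside \cref{lem:hamilton-step}, so the induction can actually start at $n=3$ with $d_C(3)=0$. The step from $3$ to $4$ keeps $d=0$ since $a_4=3=a_3+1$. At $n=4$, $\overline C_4$ is a matching with two edges, so it is $0$-rigid but not minimally so. Since $a_5=a_4$, \cref{lem:hamilton-step} gives $1$-rigidity of $\overline C_5$, consistent with the above.

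The main obstacle is this bookkeeping step: verifying that minimal rigidity occurs exactly when $n$ is triangular, and that this coincides precisely with the steps where $a$ jumps. The key identity is $dn-\binom{d+1}{2}=\binom n2-\binom q2$, which makes this clean.
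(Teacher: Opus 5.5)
Your proof is correct and is essentially the paper's argument: induction on $n$ driven by \cref{lem:hamilton-step}, with an edge count showing that $\overline C_n$ has exactly $\binom{a_n+1}{2}-n$ excess edges in dimension $d_C(n)$, so it fails to be minimally rigid precisely when $d_C(n+1)=d_C(n)+1$. The paper writes $n=\binom{r+1}{2}-s$ with $0\le s<r$, so that $r=a_n$ and $s$ is the number of excess edges; this is the same bookkeeping as your identity $dn-\binom{d+1}{2}=\binom n2-\binom q2$ with $q=a_n+1$.
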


\begin{proof}
  Recall that
  $d_C(n)=n-\bigl\lceil 1/2+\sqrt{2n+1/4}\bigr\rceil$.
  We use induction on $n$; the result holds trivially for $n\leq 4$.
  Write $n=\binom{r+1}{2}-s$ for some $0\leq s< r$; since $\binom r2=\binom{r+1}{2}-r$,
  a unique such $r,s$ exists for any $n$.
  We have $r$ minimal such that $r(r+1)\geq 2n$,
  i.e.\ $r=\bigl\lceil-1/2+\sqrt{2n+1/4}\bigr\rceil$.
  Set $d=d_C(n)=n-r-1$ and assume that $\overline C_n$ is $d$-rigid.
  Then
    $dn-\binom {d+1}{2}
     =n(n-r-1)-\binom{n-r}{2}
     =\frac{n^2-n}{2}-\binom{r+1}{2}
     =\frac{n^2-3n}{2}-s$.
  Thus, $\overline C_n$ has $s$ excess edges.

  Letting $r',s'$ be such that $n+1=\binom{r'+1}{2}-s'$,
  we have $r'=r$ and $s'=s-1$ unless $s=0$, when $r'=r+1$ and $s'=r$.
  By \cref{lem:hamilton-step}, if $s>0$ then $\overline C_{n+1}$ is $(d+1)$-rigid,
  and $d+1=d_C(n+1)=n+1-r'-1$, as required.
  Otherwise, $\overline C_{n+1}$ is $d$-rigid,
  and $d=d_C(n+1)=n+1-r'-1$, as required.
\end{proof}

\subsubsection{Paths}

We next consider the case in which the complement is a path.
For $n\geq 1$, let $d_P(n)=\maxwell(\overline P_n)$.
Since $P_n$ has $n-1$ edges,
\eqref{eq:triangular-index} and~\eqref{eq:maxwell-complement} give the following.

\begin{observation}\label{obs:path-dimension}
  For each $n\geq 1$ we have
  $d_P(n)=n-1-a_{n-1}=n-\bigl\lceil 1/2+\sqrt{2n-7/4}\bigr\rceil$.
\end{observation}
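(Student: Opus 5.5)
The first equality is just \eqref{eq:maxwell-complement} applied to $G=\overline P_n$, whose complement $P_n$ has $n-1$ edges. This gives $d_P(n)=\maxwell(\overline P_n)=n-1-a_{n-1}$ at once. The remaining task is to rewrite $1+a_{n-1}$ in closed form. I will follow the same computation that underlies \cref{obs:cycle-dimension}, with $j=n-1$ in place of $j=n$.

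For the closed form, I will not go through \eqref{eq:triangular-index} with its floor. Instead I will work directly from the defining inequalities $\binom{a_j}{2}<j\le\binom{a_j+1}{2}$.

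\textbf{Case $j\ge1$.} The map $a\mapsto a(a+1)$ is strictly increasing on nonnegative integers. The defining inequalities therefore say that $a_j$ is the least nonnegative integer $a$ with $a(a+1)\ge 2j$. Equivalently, $a_j$ is the least integer $a$ with
\[
a\ \ge\ -\tfrac12+\sqrt{2j+\tfrac14},
\]
since the right-hand side is the positive root of $a^2+a-2j=0$. Hence $a_j=\bigl\lceil -\tfrac12+\sqrt{2j+\tfrac14}\bigr\rceil$. Now substitute $j=n-1$, so that $2j+\tfrac14=2n-\tfrac74$. Adding $1$ inside the ceiling then gives
\[
1+a_{n-1}=\bigl\lceil \tfrac12+\sqrt{2n-\tfrac74}\bigr\rceil,
\]
and subtracting this from $n$ yields the stated formula.

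\textbf{Case $n=1$.} Here $j=0$, which is not covered by the defining inequalities, and the convention $a_0=0$ applies. The formula is checked directly: $\bigl\lceil\tfrac12+\sqrt{1/4}\bigr\rceil=1=1+a_0$, so both sides equal $0$.

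There is no real obstacle here. The only points needing care are the monotonicity argument that turns the quadratic inequality into a ceiling, and the separate treatment of the degenerate value $j=0$ when $n=1$.
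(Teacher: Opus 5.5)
Your proposal is correct and follows the paper's argument: the first equality is \eqref{eq:maxwell-complement} applied with $|E(P_n)|=n-1$, and the second is a routine rewriting of $a_{n-1}$ in closed form. The only difference is that the paper cites the floor formula \eqref{eq:triangular-index} and leaves the conversion to the ceiling form implicit, whereas you derive the ceiling form directly from the defining inequalities and check the convention $a_0=0$ separately, which makes that step explicit.
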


\begin{lemma}\label{lem:path-step}
  Suppose that $\overline P_n$ is $d$-rigid.
  Then $\overline P_{n+1}$ is $d$-rigid,
  and it is $(d+1)$-rigid unless $\overline P_n$ is minimally $d$-rigid.
\end{lemma}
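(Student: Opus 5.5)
The plan is to mirror the proof of \cref{lem:hamilton-step}, with $\overline P_n$ in place of $\overline C_n$. Put $G=\overline P_n$ and let $P_n=p_1p_2\cdots p_n$. We obtain $\overline P_{n+1}$ by adding a new vertex adjacent to every vertex except an endpoint, say $p_n$. It is more useful, though, to view this as first adding a suitable edge $xy$ and then coning. Concretely, $\overline P_{n+1}$ is the cone of $G+p_{n-1}p_n$ from a new vertex $v=p_{n+1}$ with the two cone edges $vp_{n-1}$ and $vp_n$ deleted. Here the relabelling puts the new vertex $p_{n+1}$ adjacent in $P_{n+1}$ to $p_n$, and treats $p_{n-1}p_n$ as the path edge being "moved".

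A simpler route avoids the double deletion. Coning $G$ from $v$ and deleting only the single edge $vp_n$ gives exactly $\overline P_{n+1}$, where the new vertex is the end of the extended path. So the statement should follow from \cref{thm:almostcone} and \cref{lem:remove-one}.

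\textbf{Weak conclusion.} Since $\overline P_{n+1}$ contains a new vertex of degree $n-1\ge d$ (the trivial small cases, including $d=0$, are checked directly), \cref{lem:zero-extension} shows that $\overline P_{n+1}$ is $d$-rigid.

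\textbf{Strong conclusion.} If $\overline P_n$ is not minimally $d$-rigid, it has at least one excess edge. The endpoint $p_n$ has degree $n-2$ in $G$, so \cref{lem:remove-one} applies with $u=p_n$: coning $G$ with $v$ and removing $vp_n$ is $(d+1)$-rigid. This graph is exactly $\overline P_{n+1}$ with $v$ the new endpoint, which gives the lemma.

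\textbf{Small cases.} When $n\le 2$, or $d=0$ with $G$ edgeless, the lemma must be checked by hand. For example, $\overline P_2$ is edgeless and minimally $0$-rigid, so only the weak conclusion is claimed there, and $\overline P_3$ is a single edge plus an isolated vertex, hence still $0$-rigid. The main obstacle is verifying the hypotheses of \cref{lem:remove-one} in degenerate situations, namely $d\le n-1$ and the applicability of \cref{thm:coning}-type statements when $G$ has few vertices. For $d\ge1$ and $n\ge 3$ the argument above should go through directly.
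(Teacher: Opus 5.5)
Your simpler route is exactly the paper's proof. The paper obtains $\overline P_{n+1}$ by coning $\overline P_n$ and deleting the edge to the endpoint $p_n$, which has degree $n-2$. It gets $d$-rigidity from \cref{lem:zero-extension}, since the new vertex has $n-1\geq d+1$ neighbours because $d\le n-2$. It gets $(d+1)$-rigidity from \cref{lem:remove-one} once $\overline P_n$ has an excess edge, and so the double-deletion construction in your first paragraph is unnecessary.

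The one loose end is that your case split does not explicitly cover $d=0$ with $n\geq 3$. The paper handles that case directly: $\overline P_{n+1}$ is then connected and hence $1$-rigid.
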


\begin{proof}
  Put $G=\overline P_n$.
  If $d=0$, then $G$ is minimally $0$-rigid precisely when $n\leq 2$.
  For $n\geq 3$, the graph $\overline P_{n+1}$ is connected and hence $1$-rigid,
  so the result follows.
  We may therefore assume $d\geq 1$.

  First note that $d\leq n-2$.
  The graph $\overline P_{n+1}$ is obtained from $G$
  by coning with a new vertex $v_{n+1}$
  and removing the edge $v_nv_{n+1}$,
  where $v_n$ is an endpoint of $P_n$ and
  $d_G(v_n)=n-2$.
  The new vertex has degree $n-1\geq d+1$,
  so \cref{lem:zero-extension} implies that $\overline P_{n+1}$ is $d$-rigid.
  If $G$ is not minimally $d$-rigid,
  then it has at least one excess edge,
  and \cref{lem:remove-one} implies that $\overline P_{n+1}$ is $(d+1)$-rigid.
\end{proof}

\begin{theorem}\label{thm:path}
  For each $n\geq1$, the graph $\overline P_n$ is $d_P(n)$-rigid.
\end{theorem}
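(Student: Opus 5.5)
The plan is to mirror the proof of \cref{thm:hamilton}, using induction on $n$ with \cref{lem:path-step} as the inductive step. The base cases $n\leq 2$ are trivial: $d_P(1)=0$ and $d_P(2)=1-a_1=0$, and every graph is $0$-rigid. One could also check $n=3,4$ directly, where $d_P(3)=0$ and $d_P(4)=1$ since $\overline P_4\cong P_4$ is connected. In any case \cref{lem:path-step} already handles $d=0$.

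To set up the count, write $n-1=\binom{r+1}{2}-s$ with $0\leq s<r$, so that $r=a_{n-1}$ for $n\geq 2$. By \cref{obs:path-dimension}, $d:=d_P(n)=n-1-r$. Using the identity $dn-\binom{d+1}{2}=\binom n2-\binom q2$ with $q=n-d=r+1$, the required edge count is $\binom n2-\binom{r+1}{2}$. Since $|E(\overline P_n)|=\binom n2-(n-1)=\binom n2-\binom{r+1}{2}+s$, if $\overline P_n$ is $d$-rigid then it has exactly $s$ excess edges.

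For the inductive step, assume $\overline P_n$ is $d_P(n)$-rigid. Writing $n=\binom{r'+1}{2}-s'$, we get $r'=r$ and $s'=s-1$ when $s>0$, and $r'=r+1$, $s'=r$ when $s=0$.
\begin{itemize}
\item If $s>0$, then $\overline P_n$ has an excess edge and so is not minimally $d$-rigid. \Cref{lem:path-step} then gives that $\overline P_{n+1}$ is $(d+1)$-rigid, and $d+1=(n+1)-1-r'=d_P(n+1)$.
\item If $s=0$, then \cref{lem:path-step} gives that $\overline P_{n+1}$ is $d$-rigid, and $d=(n+1)-1-(r+1)=d_P(n+1)$.
\end{itemize}
Either way the claim holds for $n+1$.

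The only point needing care is the bookkeeping between the triangular-number parametrisation and $a_{n-1}$, specifically the boundary case $n=1$, where $a_0=0$, together with the small cases where $d=0$. Consistency can be checked against the formula $d_P(n)=n-1-a_{n-1}$ directly, since $a_{n}=a_{n-1}$ exactly when $n-1$ is not a triangular number $\binom{r+1}{2}$, i.e.\ when $s>0$. No further rigidity input is needed; all of it is contained in \cref{lem:path-step}.
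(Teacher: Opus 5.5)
Your proof is correct and is essentially the paper's argument: induction on $n$ via \cref{lem:path-step}, split according to whether $\overline P_n$ has an excess edge. The only difference is bookkeeping. You identify $d_P(n+1)$ through the triangular parametrisation $n-1=\binom{r+1}{2}-s$, as the paper does for cycles in \cref{thm:hamilton}, whereas the paper's path proof gets $d_P(n+1)\in\{d,d+1\}$ from \eqref{eq:maxwell-deletion} and a direct comparison of the Maxwell counts for $n$ and $n+1$.
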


\begin{proof}
  We use induction on $n$, with the case $n=1$ being trivial.
  Set $d=d_P(n)$ and assume that $\overline P_n$ is $d$-rigid.
  We have
  \[
    |E(\overline P_{n+1})|=|E(\overline P_n)|+n-1
  \]
  and
  \[
    (d+1)(n+1)-\binom{d+2}{2}
    =dn-\binom{d+1}{2}+n.
  \]
  Since $\overline P_n$ is obtained from $\overline P_{n+1}$ by deleting one vertex,
  \eqref{eq:maxwell-deletion} gives $d_P(n+1)\leq d+1$.
  By the definition of $d=d_P(n)$, we have
  $|E(\overline P_n)|\ge dn-\binom{d+1}{2}$.
  Since $|E(\overline P_{n+1})|=|E(\overline P_n)|+n-1$
  and $n-1\ge d$, it follows that $d_P(n+1)\ge d$.
  Comparing the two displayed identities then shows that the upper value $d+1$
  occurs precisely when $\overline P_n$ has an excess edge. Thus,
  \[
    d_P(n+1)=
    \begin{cases}
      d,&\text{if }|E(\overline P_n)|=dn-\binom{d+1}{2},\\
      d+1,&\text{otherwise.}
    \end{cases}
  \]
  In the first case $\overline P_n$ is minimally $d$-rigid,
  and in the second case it is not.
  The result now follows from \cref{lem:path-step}.
\end{proof}

\subsection{The induction step}

The \defn{join} $G\vee H$ of two graphs $G$ and $H$ is obtained from their disjoint union by adding all edges between $V(G)$ and $V(H)$.

We next show how to add a path or a cycle to the complement.
\begin{lemma}\label{lem:add-cycle}
Let $G_0$ be a non-complete $n$-vertex $d_0$-rigid graph, and let $F$ be either $P_k$ or $C_k$, where in the latter case we require $k\geq 5$. Then $G_0\vee \overline F$ is $d$-rigid for any $d$ satisfying both of the following:
		\begin{itemize}
			\item $d\leq d_0+k$; and
			\item $G_0\vee \overline F$ has at least $d(n+k)-\binom{d+1}{2}$ edges.
		\end{itemize}
\end{lemma}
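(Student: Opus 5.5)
Induction on $k$, adding the vertices of $F$ one at a time as cone vertices with one or two cone edges removed, exactly as in \cref{lem:hamilton-step,lem:path-step}. Write $G_j=G_0\vee\overline{F_j}$, where $F_j$ is the subgraph of $F$ induced on its first $j$ vertices along the path or cycle. For $j<k$, $F_j$ is a path $P_j$; for $F=C_k$ the last step closes the cycle. Then $G_{j+1}$ is obtained from $G_j$ by coning with a new vertex $v$ and deleting the cone edge to the previous endpoint $u$ of the path, which has $d_{G_j}(u)\ge |V(G_j)|-2$. In the cycle case at the final step we delete the two cone edges to both endpoints $u,w$ of $P_{k-1}$. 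The key bookkeeping identity, as in the proof of \cref{thm:path}, is that coning raises the Maxwell count by exactly $|V(G_j)|$ when the dimension goes up by one, while we add $|V(G_j)|-1$ (or $-2$) edges. So dimension $d_j+1$ is achievable for $G_{j+1}$ precisely when $G_j$ has at least one (resp.\ two) excess edges in dimension $d_j$. Otherwise we stay at $d_j$ via \cref{lem:zero-extension}, since the new vertex has degree at least $n+j-2\ge d_j$; this needs $d_j\le n+j-2$, which follows from $G_0$ being non-complete, so the Maxwell bound forces $d_j<n+j-1$. I would prove the claim in the stronger form: for each $j$, $G_j$ is $d_j$-rigid, where $d_j=\min\{d_0+j,\ \maxwell(G_j)\}$. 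The conclusion of the lemma is then this statement for $j=k$.

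For the path steps, I use \cref{lem:remove-one}. If $d_j<\maxwell(G_j)$ or $d_j=d_0+j$ fails to be tight, I need $G_j$ to be not minimally $d_j$-rigid in order to go up. I would first reduce to the case where $G_j$ is $d_j$-rigid with $d_j$ at the minimum. If $d_j=d_0+j$ and $G_j$ has excess edges, \cref{lem:remove-one} gives $d_j+1$. If $G_j$ is minimally rigid, then the Maxwell count shows $\maxwell(G_{j+1})=d_j$, so nothing more is needed. For the first step $j=0$, $u$ is the new vertex itself, so the first vertex is a plain cone: \cref{thm:coning} gives $d_0+1$. The base case $d_0=0$ and small-dimension cases are handled by connectivity, as in the earlier lemmas. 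Here non-completeness of $G_0$ ensures the graphs involved are non-complete, so ``minimally rigid'' matches ``edge count equals Maxwell count''.

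The main obstacle is the closing step for $C_k$, where two cone edges are removed at once. I would invoke \cref{thm:almostcone2} with $s_1=u$, $s_2=w$ the endpoints of $P_{k-1}$. The non-twin condition $N(u)-w\ne N(w)-u$ holds because $k\ge5$: the other path-neighbours $u',w'$ of $u,w$ in $F$ are distinct, and both are not adjacent to $w$. This is where $C_3,C_4$ must be excluded. We need two excess edges. Following \cref{lem:remove-two} and the proof of \cref{lem:hamilton-step}, I find circuits meeting $u$ and $w$ by the substitution trick: $N(u)\supseteq N(y)\cap V(C)$ for most $y$, since $u$ misses only one vertex. The automorphism used in \cref{lem:remove-two} is no longer available because $G_0$ is arbitrary. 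Instead I would argue as in \cref{lem:hamilton-step}. Either every circuit meets both $u$ and $w$, giving condition (i); or some circuit misses one of them. In that case I re-route it onto the other endpoint by replacing a vertex (a vertex of $G_0$, or the non-neighbour $u'$ of $u$, whose neighbourhood is contained in $N(u)$) and obtain the configuration (ii). If only one excess edge is available, we instead deduce that $\maxwell(G_k)$ is only $d_{k-1}$ and use \cref{lem:zero-extension}. Checking that the re-routed circuits satisfy the precise asymmetric condition (ii) is the delicate point.
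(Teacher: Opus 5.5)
Your proposal is correct and follows the paper's argument. You add $v_1,\dots,v_k$ in order by coning and deleting the cone edges to earlier $F$-neighbours. You use \cref{thm:coning} for $v_1$, \cref{lem:remove-one} for the path steps, \cref{thm:almostcone2} to close a cycle, and \cref{lem:zero-extension} whenever the edge count forbids a rise in dimension. As in the paper, the driving identity is that $G_{j+1}$ meets Maxwell's count in dimension $d_j+1$ exactly when $G_j$ has at least $t$ excess edges in dimension $d_j$, where $t$ is the number of deleted cone edges. The only bookkeeping difference is this: the paper tracks $d_i=\rig(G_i)$ and finishes with a separate ``smallest $i$'' counting argument. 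Your invariant, that $G_j$ is $\min\{d_0+j,\maxwell(G_j)\}$-rigid, closes the induction directly, since $\maxwell(G_{j+1})\le\maxwell(G_j)+1$ by \eqref{eq:maxwell-deletion}. Your second paragraph's case analysis is muddled, but that identity is all it needs.

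One correction at the closing step: the automorphism is available. In $G_{k-1}=G_0\vee\overline{P_{k-1}}$ every vertex of $G_0$ is adjacent to every vertex of $\overline{P_{k-1}}$. So the reversal of $v_1,\dots,v_{k-1}$ fixing $G_0$ pointwise is an automorphism whatever $G_0$ is, and the paper simply applies \cref{lem:remove-two}.

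Your substitution route also works, and the step you call delicate is short. Let $w'$ be the unique non-neighbour of $w$.
\begin{itemize}
\item If every circuit meets both $u$ and $w$, condition (i) of \cref{thm:almostcone2} holds.
\item Otherwise, by symmetry some circuit $C$ misses $w$. Replace $w'$ by $w$ if $w'\in V(C)$, and otherwise replace any vertex of $C$ by $w$. This gives an isomorphic circuit $C'$ meeting $w$ with $V(C')\subseteq V(C)\cup\{w\}$.
\item If $C$ meets $u$, then $(C,C')$ witnesses condition (ii).
\item If $C$ misses $u$, then so does $C'$. Applying the same substitution to $C'$ with the roles of $u$ and $w$ swapped gives $C''$ meeting $u$, and $(C'',C')$ witnesses (ii).
\end{itemize}
In fact this shows that the automorphism hypothesis in \cref{lem:remove-two} is unnecessary when both vertices have degree $n-2$.
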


\begin{proof}
  We may assume $d_0=\rig(G_0)$,
  since $d$ will still satisfy the conditions if $d_0$ is increased.
  Write $V(F)=\{v_1,\ldots,v_k\}$ where $v_1v_2,\ldots,v_{k-1}v_k\in E(F)$.
  We construct $G_0\vee \overline F$ from $G_0$
  by adding vertices of $\overline F$ one by one in order of index.
  Let $G_i$ be the graph immediately after adding $v_i$,
  so that $|V(G_i)|=n+i$ and $G_k=G_0\vee \overline F$.
  When adding a vertex $v_i$, we first cone with new vertex $v_i$, then remove any edge(s) to its neighbour(s) in $F$ previously added.
  Note that
  \begin{itemize}
    \item adding vertex $v_1$ does not require removing any edges;
    \item adding vertex $v_i$ for $1<i<k$ requires removing the edge $v_iv_{i-1}$, and $d_{G_{i-1}}(v_{i-1})\geq|V(G_{i-1})|-2$;
    \item if $F\cong P_k$ ($k\ge 2$) then adding vertex $v_k$ requires removing the edge $v_kv_{k-1}$, and $d_{G_{k-1}}(v_{k-1})\geq|V(G_{k-1})|-2$;
    \item if $F\cong C_k$ then adding vertex $v_k$ requires removing the edges $v_kv_{k-1}$ and $v_kv_1$,
      we have $d_{G_{k-1}}(v_1)=d_{G_{k-1}}(v_{k-1})=|V(G_{k-1})|-2$,
      the permutation reversing the order of $v_1,\ldots,v_{k-1}$
      and leaving other vertices fixed is an automorphism of $G_{k-1}$,
      and, since $k\geq 5$, $v_1$ and $v_{k-1}$ are not twins of $G_{k-1}$.
  \end{itemize}
  For $i\ge 1$,
  let $d_i=\rig(G_i)$.
  Note that, since $G_i$ is a subgraph of the cone of $G_{i-1}$,
   \cref{thm:coning} implies that $d_i\leq d_{i-1}+1$.
  We claim that, for each $1\leq i\leq k$, we have $d_i=d_{i-1}+1$ if
  \begin{equation}|E(G_i)|\geq(d_{i-1}+1)(n+i)-\binom{d_{i-1}+1+1}{2},\label{eq:inequality}\end{equation}
  and $d_i=d_{i-1}$ otherwise.
  Indeed, if \eqref{eq:inequality} is satisfied then $G_{i-1}$ had at least $t$ excess edges,
  where $t$ is the number of edges removed when adding $v_i$.
  If $t=0$ the claim follows from \cref{thm:coning}.
  Otherwise, in each case listed above the conditions of \cref{lem:remove-one}
  or \cref{lem:remove-two} are satisfied,
  and the claim follows.
  If \eqref{eq:inequality} is not satisfied,
  observe that $G_0$ being non-complete implies $d_0\leq n-2$,
  and hence $d_{i-1}\leq n+i-3$.
  Since $G_i$ is obtained by adding a vertex of degree at least $n+i-3$
  to the $d_{i-1}$-rigid graph $G_{i-1}$,
  \cref{lem:zero-extension} implies that $G_i$ is also $d_{i-1}$-rigid
  and the claim holds.

  Finally, we argue that it follows from the claim above that $d_k\geq d$.
  Indeed, suppose that $d_k= d_0+k-\ell$ for some $\ell>0$.
  Then there is some smallest $i$ such that $d_i\leq d_0+i-\ell$,
  and by the claim we must have 
  \begin{equation}|E(G_i)|<(d_0+i-\ell+1)(n+i)-\binom{d_0+i-\ell+2}{2}.\label{H_i}\end{equation}
  Suppose \eqref{H_i} is true for some $i<k$.
  Now we have \[|E(G_{i+1})|\leq |E(G_i)|+n+i<(d_0+(i+1)-\ell+1)(n+i+1)-\binom{d_0+(i+1)-\ell+2}{2}.\]
  Thus \eqref{H_i} also holds with $i$ replaced by $i+1$, and by induction it holds for $i=k$, giving $d<d_k+1$.
\end{proof}
	We now conclude the main result of this section.
\begin{proof}[Proof of \cref{thm:2-reg}]
  Let $d=\maxwell(G)$.
  The result is trivial if $G$ is complete, so we assume not.
  We prove the conclusion for all $n$ by induction on $r$,
  where $\overline G$ has $r$ components.
  The case $r=1$ is covered by \cref{thm:hamilton} and \cref{thm:path}.

  We may assume $r\geq 2$.
  Choose a component $F$ of $\overline G$ such that
  $G'=G-V(F)$ is non-complete.
  This is possible because $G$ is non-complete:
  if $\overline G$ has only one nontrivial component, choose an isolated vertex for $F$,
  and otherwise choose $F$ so that another nontrivial component remains.
  Let $k=|V(F)|$.
  Then $F$ is either $P_k$ or $C_k$,
  where in the latter case $k\geq 5$ by the hypothesis of \cref{thm:2-reg}.
  We have $G=G'\vee \overline F$.

  Let $d'=\maxwell(G')$.
  By induction, $G'$ is $d'$-rigid.
  Since $G'$ is obtained from $G$ by deleting $k$ vertices,
  \eqref{eq:maxwell-deletion} gives $d\leq d'+k$.
  Thus we conclude from \cref{lem:add-cycle} that
  $G=G'\vee \overline F$ is $d$-rigid.
\end{proof}

\section{Complements of bounded-degree graphs}\label{sec:bounded-degree}

We begin this section by proving \cref{cor:exceptional-components}.

\begin{proof}[Proof of \cref{cor:exceptional-components}]
  Put $F=\overline G$, $t=\sigma(F)$ and $d=\maxwell(G)$.
  Since $\Delta(F)\leq 2$,
  the value $t$ is precisely the number of $C_3$- and $C_4$-components of $F$.
  If $t=0$, the result follows from \cref{thm:2-reg}.

  Choose a set $S$ containing one vertex from each $C_3$- or $C_4$-component of $F$,
  and put $H=G-S$ and $n'=|V(H)|=n-t$.
  Each of the $t$ exceptional components has at least $3$ vertices, so $n\geq 3t$.
  If $t\geq 2$, then $n'=n-t\geq 2t\geq 4$.
  If $t=1$ and $n=4$, then $|E(G)|\leq 3$,
  so $d\leq 1$ and the result follows from the $0$-rigidity convention.
  We may therefore assume that $n'\geq 4$.

  The graph $F-S$ has maximum degree at most two and no component isomorphic to $C_3$ or $C_4$.
  Thus $H$ is Maxwell-sharp by \cref{thm:2-reg}.
  Put $d'=\max\{d-t,0\}$.
  By~\eqref{eq:maxwell-deletion}, $\maxwell(H)\geq d-t$.
  Since $H$ is Maxwell-sharp,
  downward monotonicity and the $0$-rigidity convention imply that $H$ is $d'$-rigid.

  If $t=1$, then $|E(F)|\geq 3$,
  so \eqref{eq:maxwell-complement} gives $d\leq n-3$ and hence $d'\leq n'-3$.
  The unique vertex of $S$ has $n'-2\geq d'$ neighbours in $H$,
  so \cref{lem:zero-extension} implies that $G$ is $d'$-rigid.
  Since $d'\geq d-1=d-\lceil t/2\rceil$, the result follows in this case.
  We may therefore assume that $t\geq 2$.

  Pair all but at most one vertex of $S$,
  and let $M$ be the resulting matching of size $\floor{t/2}$.
  For each edge of $M$, no path of length at most $2$ joins its ends in $F$.
  Since $|E(F)|\geq 6$,
  \eqref{eq:maxwell-complement} gives $d\leq n-4$ and hence $d'\leq n'-4$.
  Hence \cref{lem:paired-extension}, applied in dimension $d'$ with $\Delta(F)=2$,
  shows that $G$ is rigid in dimension at least
  $d'+\floor{t/2}\geq d-t+\floor{t/2}=d-\bigl\lceil t/2\bigr\rceil$.
\end{proof}

Recall that, for a graph $F$, a set $S\subseteq V(F)$ is admissible if $F-S$ has maximum degree at most two
and no connected component isomorphic to $C_3$ or $C_4$.

\begin{theorem}\label{thm:structural-bound}
  Let $D=D(n)$ be an integer with $D\ge 2$,
  and let $F$ be an $n$-vertex graph with $\Delta(F)\leq D$.
  Let $S\subseteq V(F)$ be admissible.
  Put $s=|S|$, $n'=n-s$ and $H=\overline F-S$.
  If $n'\ge 2D$, then
  \[
    \rig(\overline{F})
    \ge \min\{\maxwell(H),n'-2D\}
    +\frac{1}{2}\max\{0,s-1-D^2\}.
  \]
\end{theorem}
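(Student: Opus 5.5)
The plan is to combine \cref{thm:2-reg} with \cref{lem:paired-extension}, as in the proof of \cref{cor:exceptional-components}. Put $d'=\max\{0,\min\{\maxwell(H),n'-2D\}\}$; since $n'\ge 2D$, this equals $\min\{\maxwell(H),n'-2D\}$.

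First we show that $H$ is $d'$-rigid. Since $S$ is admissible, $\overline H=F-S$ has maximum degree at most two and no $C_3$- or $C_4$-component, so $\delta(H)\ge n'-3$. If $n'\ge 4$, \cref{thm:2-reg} shows that $H$ is Maxwell-sharp. By downward monotonicity, $H$ is then $d'$-rigid because $d'\le\maxwell(H)$. If $n'<4$, then $n'\ge 2D\ge4$ fails, so this case does not occur. Moreover $d'\le n'-2D=n'-2\Delta(F)$ when $\Delta(F)=D$; if $\Delta(F)<D$ the condition $d'\le n'-2\Delta(F)$ holds a fortiori. So the hypothesis of \cref{lem:paired-extension} holds with $G=\overline F$.

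Next we need a matching $M$ on $S$ whose edges join vertices at distance at least $3$ in $F$, with $|M|\ge\frac12\max\{0,s-1-D^2\}$. Consider the auxiliary graph $A$ on $S$ in which two vertices are adjacent when their distance in $F$ is at most $2$. Each vertex has at most $D+D(D-1)=D^2$ vertices within distance $2$, so $\Delta(A)\le D^2$. We want a large matching in the complement $\overline A$, which has minimum degree at least $s-1-D^2$. A greedy argument suffices: repeatedly pick an unmatched vertex $x$. It has at most $D^2$ non-partners, and as long as the number $u$ of unmatched vertices satisfies $u-1-D^2\ge1$, there is an unmatched partner for $x$. We pair them and repeat. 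This continues while $u\ge D^2+2$, and each pair removes two vertices, so we obtain at least $\lceil (s-D^2-1)/2\rceil$ pairs when $s\ge D^2+1$. Alternatively, a standard bound gives a matching of size at least $\min\{\delta,\lfloor s/2\rfloor\}$ in a graph of minimum degree $\delta$, which is also enough.

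Finally, \cref{lem:paired-extension} gives that $\overline F$ is $(d'+|M|)$-rigid, which yields the bound. The main care points are the degenerate cases: $d'=0$ with $M$ empty, which is handled by the convention, and confirming that the lemma allows $S\setminus V(M)$ to be nonempty. Its proof handles the leftover vertices by $0$-extension, using $n'+2m-\Delta(F)\ge d'+m$, which holds since $d'\le n'-2D$. I expect the matching bound to be the only real step, and the greedy count above should settle it.
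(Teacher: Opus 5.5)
Your proposal is correct and follows essentially the same route as the paper: \cref{thm:2-reg} (using $n'\ge 2D\ge 4$) together with downward monotonicity makes $H$ rigid in dimension $\min\{\maxwell(H),n'-2D\}$, and \cref{lem:paired-extension} is then applied to a matching on $S$ whose pairs have no path of length at most $2$ between them in $F$. Your greedy pairing, which stops once at most $D^2+1$ vertices remain unmatched, is the same as the paper's argument, which takes a maximal matching and notes that the uncovered vertices form a clique in a graph of maximum degree at most $D^2$.
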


\begin{proof}
Put $d=\min\{\maxwell(H),n'-2D\}\ge 0$
and $m=\max\{0,s-1-D^2\}$.
We first find a $d$-rigid induced subgraph on $n'$ vertices
and then add at least $m$ of the remaining vertices in far-apart pairs.
By admissibility, $\overline H$ has maximum degree at most two and no component isomorphic to $C_3$ or $C_4$.
Since $n'\ge 2D\ge 4$,
$H$ is Maxwell-sharp by \cref{thm:2-reg}.
Since $d\leq\maxwell(H)$,
downward monotonicity implies that $H$ is $d$-rigid.

Define an auxiliary graph $X$ on $S$ by joining two vertices if no path of length at most $2$ joins them in $F$.
For every $x\in S$,
at most $D+D(D-1)=D^2$ vertices of $S\sm\{x\}$ can be joined to $x$ by a path of length at most $2$ in $F$.
Thus every vertex of $X$ has degree at least $s-1-D^2$.
Let $M$ be a maximal matching in $X$.
Since the vertices not covered by $M$ form a clique in $\overline X$ and $\Delta(\overline X)\leq D^2$,
at most $D^2+1$ vertices are uncovered.
Therefore $2|M|\ge\max\{0,s-1-D^2\}=m$.
Since $d\leq n'-2D\leq n'-2\Delta(F)$,
\cref{lem:paired-extension} shows that $\overline F$ is $(d+|M|)$-rigid.
Hence $\rig(\overline F)\ge d+|M|\ge d+m/2$, as required.
\end{proof}

Recall that a linear forest is a graph whose components are paths, possibly trivial.
Akbari, Amanihamedani, Mousavi, Nikpey and Sheybani~\cite{AAMNS19+} conjectured that every graph $G$ with $\delta(G)\geq2$ has an induced linear forest with at least
\mbox{$\sum_{v\in V(G)}2/(d_G(v)+1)$} vertices.
Joret and Petit~\cite{JP25} proved this as a consequence of the following stronger theorem.
Given a function $f\colon\mathbb{N}\to[0,1]$ and a graph $G$,
with a slight abuse of notation write
\[
  f(G)=\sum_{v\in V(G)}f(d_G(v)).
\]

\begin{theorem}[Joret and Petit~\cite{JP25}*{Theorem~4}]\label{thm:joret-petit}
  Let $f\colon\mathbb{N}\to\mathbb{R}$ be defined as follows:
  \[
    f(d)=
    \begin{cases}
      1&\text{if $d=0$},\\
      5/6&\text{if $d=1$},\\
      2/(d+1)&\text{if $d\geq2$}.
    \end{cases}
  \]
  Then every graph $G$ has an induced linear forest with at least $f(G)$ vertices.
\end{theorem}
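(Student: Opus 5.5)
The theorem is quoted from Joret and Petit, and in the paper it is used only as a black box. The proof I would attempt is a minimal-counterexample argument with local reductions. Let $G$ be a counterexample with $|V(G)|+|E(G)|$ minimum. Two facts about $f$ drive everything: $f$ is nonincreasing, and $f(d)\le 1$. The basic tool is this: if we delete a set $X$ and put a set $Y\subseteq X$ into the forest, then $f(G-X)\ge\sum_{u\notin X}f(d_G(u))$, because degrees only drop. So it suffices to find $X$ and a set $Y\subseteq X$ that is \emph{safe}, meaning that $Y\cup L$ is an induced linear forest for every induced linear forest $L$ of $G-X$, with
\[
|Y|\ \ge\ \sum_{u\in X}f(d_G(u)).
\]
Minimality then gives a contradiction. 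The natural way to make $Y$ safe is to take $X\supseteq N[Y]$, or to take $X$ containing all neighbours of $Y$ except along an endpoint where extending a path is harmless.

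I would run the reductions in the following order.

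First, components: the statement is additive over components. Isolated vertices (weight $1$) and $K_2$-components (weight $5/3\le 2$) are handled directly, and any path component is itself a linear forest.

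Second, a vertex $v$ of degree $1$ with neighbour $u$: delete $\{v\}\cup N[u]$ (or just $\{u,v\}$) and add $v$, and possibly $u$. The value $f(1)=5/6$ is presumably calibrated exactly so that these cases balance, so this is where I would check the arithmetic carefully.

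Third, a vertex $v$ of degree $2$: here I would use the path structure. Either add $v$ together with a neighbour as a path segment, or suppress a long degree-$2$ thread (an induced path whose internal vertices have degree $2$). Threads of length at least $3$ should be reducible, since every third vertex can be discarded.

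The remaining case, minimum degree $\delta\ge 3$ with no reducible configuration, is where I expect the main obstacle. The naive reduction (take $v$ of minimum degree, put $v$ in $Y$, delete $N[v]$) costs up to $(\delta+1)\cdot\frac{2}{\delta+1}=2$ while gaining only $1$. So we must gain two vertices per step. The first thing I would try is a probabilistic argument in the style of Caro--Wei and Alon--Kahn--Seymour. Take a uniformly random ordering and call $v$ good if at most one neighbour of $v$ precedes it; this happens with probability exactly $2/(d(v)+1)$, matching $f$. The good vertices induce a forest in which each vertex has at most one earlier neighbour. The difficulty is that such a forest can contain vertices of large degree (stars), so it need not be linear. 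The plan is to repair this locally. For each good vertex with at least two later good neighbours, either show that this event has small enough probability to be absorbed in the slack $2/(d+1)$ versus the exact count, or modify the ordering rule, for instance by declaring $v$ good only if it is also not the earliest neighbour of two later good vertices. The final step would then be a charging argument showing that the expected number of discarded vertices is compensated. If this fails, I would fall back on a discharging version of the minimal counterexample, treating a minimum-degree vertex $v$ together with a neighbour $u$ of smallest degree as a two-vertex path $Y=\{u,v\}$, and bounding $\sum_{x\in N[u]\cup N[v]}f(d(x))\le 2$ using $d(x)\ge\delta$ and an estimate on $|N[u]\cup N[v]|$.
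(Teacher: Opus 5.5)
The paper does not prove this statement: it quotes it from Joret and Petit and uses it as a black box, so there is no internal argument to compare with. Judged on its own, your proposal is a plan rather than a proof. It has a genuine gap exactly where you expect one, the case $\delta\ge 3$, and both routes you offer fail as stated.

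In the random-ordering route, $\Pr[v\text{ good}]=2/(d(v)+1)=f(d(v))$ exactly whenever $d(v)\ge 2$. The only slack anywhere is $1/6$ per vertex of degree $1$. So for $\delta\ge 2$ the expected number of good vertices is exactly $f(G)$, and there is nothing to absorb losses into. Any repair that only removes good vertices, including your modified rule, strictly lowers the expectation below $f(G)$, so an expectation argument can no longer certify the bound. For example, in $K_{3,3}$ the ordering $a_1,b_1,b_2,b_3,a_2,a_3$ makes $\{a_1,b_1,b_2,b_3\}$ good, inducing $K_{1,3}$. Your rule discards $a_1$ on this positive-probability event and never adds anything, so its expectation is below $3=f(K_{3,3})$.

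In the discharging fallback, the inequality $\sum_{x\in N[u]\cup N[v]}f(d(x))\le 2$ fails in every cubic graph in which $uv$ lies in no triangle: six vertices of weight $1/2$ give $3$. In $K_{3,3}$ the set $N[u]\cup N[v]$ is the whole vertex set, so the two-vertex path gives $2<3$ with nothing left to recurse on. A different configuration, such as an induced $P_3$ or an independent triple, is needed there.

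The missing ingredient is this: your basic inequality $f(G-X)\ge\sum_{x\notin X}f(d_G(x))$ throws away the gains $f(d_{G-X}(w))-f(d_G(w))$ at vertices whose degrees drop, and these are what make such reductions balance. For instance, in a $\delta$-regular graph of girth at least $5$, the two-vertex-path reduction works only after crediting the $2(\delta-1)^2$ edges leaving $N[u]\cup N[v]$. Each is worth at least $2/(\delta(\delta+1))$, and the resulting condition reduces to $2(\delta-1)(\delta-2)\ge 0$.

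The same problem affects your low-degree step. Take a leaf $v$ attached to $u$ with $d(u)=2$, where the other neighbour $w$ of $u$ also has degree $2$. Your options compare $1$ against $5/6+2/3=3/2$, or $2$ against $5/6+2/3+2/3=13/6$, so neither closes. One needs instead something like $Y=\{v,u,w\}$ with $X=\{v\}\cup N[u]\cup N[w]$, whose weight is at most $17/6\le 3$.

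A proof along your lines therefore requires two things: an explicit list of reducible configurations, with degree-drop gains counted, and a proof that every graph contains one of them. That case analysis is the substance of the theorem, and it is not present in the proposal.
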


Applying \cref{thm:structural-bound} to the induced linear forest supplied by \cref{thm:joret-petit}
gives the following exact degree-sequence bound.

\begin{corollary}\label{cor:degree-sequence-bound-exact}
  Let $D=D(n)$ be an integer with $D\ge 2$,
  and let $F$ be an $n$-vertex graph with $\Delta(F)\leq D$.
  Put $n'=\ceil{f(F)}$ and $s=n-n'$, and suppose that $n'\ge 2D$.
  Then
  \[
    \rig(\overline F)
    \ge \min\{d_P(n'),n'-2D\}
    +\frac{1}{2}\max\{0,s-1-D^2\}.
  \]
\end{corollary}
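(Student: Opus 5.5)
The plan is to apply \cref{thm:structural-bound} with $S$ chosen as the complement of a large induced linear forest in $F$, and then bound $\maxwell(H)$ from below by $d_P(n')$.

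\textbf{Choosing $S$.} By \cref{thm:joret-petit}, $F$ has an induced linear forest $L$ with at least $f(F)$ vertices. Since the order of $L$ is an integer, it is at least $\ceil{f(F)}=n'$. Taking an induced subgraph of $L$ on exactly $n'$ vertices (delete vertices, e.g.\ leaves or isolated vertices) still gives an induced linear forest, because induced subgraphs of linear forests are linear forests. Set $S=V(F)\sm V(L)$, so $|S|=s$ and $F-S=L$. Then $F-S$ has maximum degree at most two and every component is a path, so in particular no component is $C_3$ or $C_4$. Hence $S$ is admissible.

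\textbf{Applying the structural bound.} Put $H=\overline F-S=\overline L$. Since $n'\ge 2D$ by hypothesis, \cref{thm:structural-bound} gives
\[
  \rig(\overline F)\ge\min\{\maxwell(H),n'-2D\}+\tfrac12\max\{0,s-1-D^2\}.
\]
It therefore suffices to show $\maxwell(H)\ge d_P(n')$, since the minimum is monotone in its first argument.

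\textbf{Comparing Maxwell dimensions.} The linear forest $L$ on $n'$ vertices has at most $n'-1$ edges, and $P_{n'}$ has exactly $n'-1$ edges. By \eqref{eq:maxwell-complement}, $\maxwell(H)=n'-1-a_{|E(L)|}$, and $a_j$ is nondecreasing in $j$. Therefore
\[
  \maxwell(H)\ge n'-1-a_{n'-1}=d_P(n'),
\]
using \cref{obs:path-dimension}. Equivalently, $H$ has at least as many edges as $\overline P_{n'}$ on the same vertex count, and Maxwell dimension is nondecreasing in the number of edges.

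There is no real obstacle here. The only care needed is the rounding step: justifying that the linear forest can be trimmed to exactly $\ceil{f(F)}$ vertices, and that trimming preserves admissibility.
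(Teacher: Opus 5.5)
Your proposal is correct and follows the paper's proof essentially step for step. You take $S$ to be the complement of an induced linear forest on exactly $n'=\ceil{f(F)}$ vertices supplied by \cref{thm:joret-petit}, bound $\maxwell(H)\ge d_P(n')$ using the edge count of $\overline H=F-S$, and apply \cref{thm:structural-bound}. Your explicit justification that the forest can be trimmed to exactly $n'$ vertices while staying admissible spells out a point the paper asserts without comment.
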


\begin{proof}
By \cref{thm:joret-petit},
there is a set $U\subseteq V(F)$ of size $n'$ such that $F[U]$ is a linear forest.
Put $S=V(F)\sm U$ and $H=\overline F-S$.
Then $|S|=s$, and $S$ is admissible.
Moreover,
$|E(\overline H)|\leq n'-1=|E(P_{n'})|$.
Thus $|E(H)|\geq |E(\overline P_{n'})|$,
and monotonicity of $\maxwell$ in the number of edges gives
$\maxwell(H)\ge\maxwell(\overline P_{n'})=d_P(n')$.
The result now follows from \cref{thm:structural-bound}.
\end{proof}

We will prove the following exact form of \cref{cor:bounded-complement-degree},
which immediately implies that result.
\begin{corollary}
  \label{cor:bounded-complement-degree-exact}
  Let $\Delta\ge 2$ be an integer,
  and let $G$ be an $n$-vertex graph with $\delta(G)\ge n-1-\Delta$.
  If $n\ge 2\Delta^2$, then
  \[
    \rig(G)
    \ge d_P(n)-\frac{n}{2}+\frac{n}{\Delta+1}
    -\frac{\Delta^2+1}{2}.
  \]
\end{corollary}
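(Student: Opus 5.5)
We apply \cref{cor:degree-sequence-bound-exact} with $F=\overline G$ and $D=\Delta$. The hypothesis $\delta(G)\ge n-1-\Delta$ says exactly that $\Delta(F)\le\Delta$. The plan has three steps: bound $n'=\ceil{f(F)}$ from below, check the side condition $n'\ge 2\Delta$, and then show the resulting lower bound dominates the claimed expression in each branch of the minimum. Throughout, $s=n-n'$.

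\emph{Step 1: a lower bound on $n'$.} Every vertex of $F$ has degree at most $\Delta$, so $f(d_F(v))\ge 2/(\Delta+1)$ for every $v$. This uses $f(0)=1$, $f(1)=5/6\ge 2/3\ge 2/(\Delta+1)$ since $\Delta\ge2$, and the fact that $2/(d+1)$ is decreasing in $d$. Hence $n'\ge f(F)\ge 2n/(\Delta+1)$. Combined with $n\ge 2\Delta^2$ this gives
\[
n'\ge \frac{4\Delta^2}{\Delta+1}\ge 2\Delta,
\]
since $2\Delta\ge\Delta+1$. So \cref{cor:degree-sequence-bound-exact} applies and yields
\[
\rig(G)\ge\min\{d_P(n'),n'-2\Delta\}+\tfrac12\max\{0,s-1-\Delta^2\}.
\]
We always use $\tfrac12\max\{0,s-1-\Delta^2\}\ge\tfrac12(s-1-\Delta^2)$.

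\emph{Step 2: the case where the minimum is $d_P(n')$.} Since $\overline P_{n'}$ is an induced subgraph of $\overline P_n$ obtained by deleting $s$ vertices, \eqref{eq:maxwell-deletion} gives $d_P(n')\ge d_P(n)-s$. The bound then becomes
\[
d_P(n)-s+\tfrac12\bigl(s-1-\Delta^2\bigr)=d_P(n)-\tfrac{n}{2}+\tfrac{n'}{2}-\tfrac{\Delta^2+1}{2}.
\]
Using $n'/2\ge n/(\Delta+1)$ from Step 1, this is at least the claimed expression.

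\emph{Step 3: the case where the minimum is $n'-2\Delta$.} This is the step needing care. The bound becomes
\[
n'-2\Delta+\tfrac12\bigl(n-n'-1-\Delta^2\bigr)=\tfrac{n}{2}+\tfrac{n'}{2}-2\Delta-\tfrac{\Delta^2+1}{2}.
\]
Using $n'/2\ge n/(\Delta+1)$ and $d_P(n)=n-1-a_{n-1}$ from \cref{obs:path-dimension}, it suffices to show $a_{n-1}\ge 2\Delta-1$. By the definition of $a_j$, this holds as soon as
\[
n-1>\binom{2\Delta-1}{2}=2\Delta^2-3\Delta+1.
\]
That follows from $n\ge2\Delta^2$, since $2\Delta^2-1>2\Delta^2-3\Delta+1$ for $\Delta\ge1$.

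The main obstacle is Step 3. The cap $n'-2\Delta$ is not in general dominated by $d_P(n')$, because $n'$ may be only of order $\Delta$. So one cannot simply discard this branch of the minimum. The point is that the loss it causes is absorbed by the $\sqrt{2n}$-type gap between $n$ and $d_P(n)$, and this is exactly where the hypothesis $n\ge 2\Delta^2$ is used.
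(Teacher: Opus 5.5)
Your proposal is correct and follows essentially the paper's argument. You apply \cref{cor:degree-sequence-bound-exact}, control $d_P(n')$ via \eqref{eq:maxwell-deletion}, and absorb the cap $n'-2\Delta$ using $n-d_P(n)\ge 2\Delta$; the only differences are cosmetic (taking $D=\Delta$ rather than $\max\{2,\Delta(F)\}$, splitting into cases rather than bounding the minimum uniformly by $n'-(n-d_P(n))$, and checking $n-d_P(n)\ge 2\Delta$ through the definition of $a_{n-1}$ rather than the square-root formula).
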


\begin{proof}
  Let $F$ be the complement of $G$.
  The assumption $\delta(G)\ge n-1-\Delta$ gives $\Delta(F)\le\Delta$.
  Put $D=\max\{2,\Delta(F)\}$ and $n'=\ceil{f(F)}$.
  Since $f(k)\ge 2/(\Delta+1)$ for every $0\le k\le\Delta$,
  the assumption $n\ge 2\Delta^2$ gives
  $n'\ge 2n/(\Delta+1)\ge 2\Delta\ge 2D$.
  Moreover, since $n\ge 2\Delta^2$, \cref{obs:path-dimension} gives
  \begin{equation}\label{eq:path-deficit-bound}
    n-d_P(n)
    =\ceil{\frac{1}{2}+\sqrt{2n-\frac{7}{4}}}
    \ge \frac{1}{2}+\sqrt{4\Delta^2-\frac{7}{4}}
    \ge 2\Delta
    \ge 2D.
  \end{equation}
  The penultimate inequality follows because
  $4\Delta^2-\frac{7}{4}-(2\Delta-\frac{1}{2})^2=2\Delta-2\ge 0$,
  and the last follows from $D\le\Delta$.
  Since $\overline P_{n'}$ is obtained from $\overline P_n$ by deleting $n-n'$ vertices,
  \eqref{eq:maxwell-deletion} gives
  $d_P(n')\ge d_P(n)-(n-n')=n'-(n-d_P(n))$.
  By~\eqref{eq:path-deficit-bound},
  $n'-2D\ge n'-(n-d_P(n))$.
  Thus,
  \[
    \min\{d_P(n'),n'-2D\}\ge n'-(n-d_P(n)).
  \]
  Also, since $D\le\Delta$,
  $\max\{0,n-n'-1-D^2\}\ge n-n'-1-\Delta^2$.
  Therefore \cref{cor:degree-sequence-bound-exact} gives
  \begin{align*}
    \rig(G)
    &\ge n'-(n-d_P(n))+\frac{n-n'-1-\Delta^2}{2}\\
    &=d_P(n)-\frac{n}{2}+\frac{n'}{2}-\frac{\Delta^2+1}{2}
    \ge d_P(n)-\frac{n}{2}+\frac{n}{\Delta+1}-\frac{\Delta^2+1}{2},
  \end{align*}
  as required.
\end{proof}

\begin{proof}[Proof of \cref{cor:bounded-complement-degree}]
  Since $\Delta=o(\sqrt n)$,
  we have $n\ge 2\Delta^2$ for all sufficiently large $n$.
  By \cref{obs:path-dimension}, $d_P(n)=n-O(\sqrt n)$,
  so \cref{cor:bounded-complement-degree-exact} gives the stated bound.
\end{proof}

\section*{Acknowledgements}

A.\,N.\ was partially supported by EPSRC grant EP/X036723/1 and by UK Research and Innovation (grant number UKRI1112), under the EPSRC Mathematical Sciences Small Grant scheme.
P.\,M.\ was supported by UKRI under the Horizon Europe Guarantee for MSCA Postdoctoral Fellowships (EP/Z001781/1, IRIS).
For the purpose of open access, the authors have applied a Creative Commons Attribution (CC-BY) licence to any Author Accepted Manuscript version arising from this work.

	\bibliography{library}

\end{document}